\definecolor{shadecolor}{gray}{0.80}
\newtheorem{thm}{Theorem}[section]
\newtheorem{rmk}{Remark}[section]
\newtheorem{lem}{Lemma}[section]
\newtheorem{ass}{Assumption}[section]
\newtheorem{prop}{Proposition}[section]
\newtheorem{exam}{Example}
\newcommand{\bs}{\boldsymbol}
\begin{document}
\title{Multi-Parameter Tikhonov Regularization\\ -- An Augmented Approach}
\author{Kazufumi Ito\thanks{Center for Research in Scientific Computation \&
Department of Mathematics, North Carolina State University, Raleigh, North Carolina
27695, USA. (kito@math.ncsu.edu)} \and Bangti Jin\thanks{Department of Mathematics, Texas A\&M University,
College Station, Texas 77843-3368, USA. ({\texttt btjin@math.tamu.edu})}\and Tomoya
Takeuchi\thanks{Collaborative Research Center for Innovative Mathematical Modelling,
Institute of Industrial Science, The University of Tokyo
4-6-1-Cw601 Komaba, Meguro-ku, Tokyo 153-8505, Japan. ({\texttt takeuchi@sat.t.u-tokyo.ac.jp})}}

\maketitle
\begin{abstract}
We study multi-parameter regularization (multiple penalties) for solving linear inverse
problems to promote simultaneously distinct features of the sought-for objects. We
revisit a balancing principle for choosing regularization parameters from the viewpoint
of augmented Tikhonov regularization, and derive a new parameter
choice strategy called the \textit{balanced discrepancy principle}.
A priori and a posteriori error
estimates are provided to theoretically justify the principles, and numerical algorithms
for efficiently implementing the principles are also provided. Numerical results on
denoising are presented to illustrate the feasibility of the balanced discrepancy principle.\\
\textbf{Keywords}:
multi-parameter regularization, augmented Tikhonov regularization,
balanced discrepancy principle
\end{abstract}

\begin{AMS}
65J20, 65J22, 49N45
\end{AMS}

\section{Introduction}
We investigate a regularization technique for robustly solving linear inverse problems modeled
by
\begin{equation}\label{eqn:lininv}
Ku^\dagger=g^\dagger,
\end{equation}
where $g^\dagger$ is the (inaccessible) exact data and $u^\dagger \in X$ represents the unknown
exact solution, and $K:X\rightarrow Y$ is a bounded linear operator. Here the spaces $X$ and
$Y$ are general Banach spaces, and the operator $K$ can be an embedding operator (image denoising), a convolution
operator (deblurring, scattering) and the Radon transform (computed tomography).
The objective is to find an approximation $u$ to the solution $u^\dagger$ from noisy measurement $g^\delta\in Y $ of
the exact data $g^\dagger$. The accuracy of the noisy data $g^\delta$ is measured by the standard
$\mathrm{L}^2$ fidelity functional
$\phi(u^\dagger,g^\delta)=\frac{1}{2}
\|Ku^\dagger-g^\delta\|^2=\frac{1}{2}\delta^2$ with the noise level $\delta$.

As is typical for many inverse problems, problem \eqref{eqn:lininv} suffers from ill-posedness
or instability. This poses significant challenges to their
accurate yet stable numerical solution in the presence of data noise, which is often the
case in practical applications. Often, regularization is applied to find a stable approximate solution.
One of the most widely used approaches is known as Tikhonov regularization. It seeks to minimize the following functional
\begin{equation}\label{eqn:multi}
J_{\bs\eta}(u)=\phi(u,g^\delta)+\bs{\eta}\cdot\bs{\psi}(u),
\end{equation}
over a closed convex feasible solution set $\mathcal{C}$. The solution to
the minimization problem, denoted by $u_{\mathcal{\bs\eta}}^\delta$ ($u_{\bs\eta}$ in case of the exact
data $g^\dagger$), serves as an approximation to the exact solution $u^\dagger$. Here the
(nonnegative) vector-valued penalty functional $\bs\psi$ encodes the a
priori knowledge, and $\bs{\eta} \cdot \bs\psi(u)$ denotes the dot product
between the regularization parameter vector
$\bs{\eta}=(\eta_1,\eta_2)^\mathrm{t}\in\mathbb{R}^2_+$ and the
penalty $\bs\psi(u) = (\psi_1(u),\psi_2(u))^\mathrm{t}$. The penalty
$\bs\psi$ is selected to promote desirable features of the sought-for solution, e.g.,
edge, sparsity and texture; and often the optimization problem \eqref{eqn:multi}
is nonsmooth. The (vector) parameter $\bs\eta$
compromises the fidelity $\phi$ with the penalty $\bs\psi$, and its appropriate
choice plays a crucial role in obtaining stable yet accurate solutions.
Therefore, an automated selection rule and efficient algorithms for determining $\bs\eta$
are essential.

One distinct feature of the model \eqref{eqn:multi} is that it includes multiple 
penalties (hence termed as multi-parameter regularization). This is motivated by 
the following empirical observations. In practice,
many objects exhibit distinct multiple features/structures.
However, one single penalty generally favors one feature over others, and thus
unsuitable for promoting multiple distinct features. For example, total variation ($\mathrm{TV}$) is
well suited to reconstructing piecewise constant structures, however, it results in significant
staircases in gray regions. One may improve $\mathrm{TV}$-reconstruction by introducing an
additional penalty, say $\mathrm{L}^1$ norm of $\Delta u$ where $\Delta$ is the
Laplacian operator. Hence, a reliable recovery of several distinct features naturally 
calls for multiple penalties, and it is not surprising that the idea of 
multi-parameter regularization has been pursued earlier. For instance, in 
\cite{ItoKunisch:2000} the authors proposed a model to preserve both flat and gray 
regions in natural images by combining $\mathrm{TV}$ with Sobolev smooth penalty. 
We refer interested readers to \cite{Stephanakis:1997,LuShenXu:2007} (imaging),
\cite{ZouHastie:2005} (microarray data analysis), \cite{XuFukudaLiu:2006} (geodesy)
and \cite{LuPereverzev:2009} (machine learning) for other interesting applications.

However, a general theory of multi-parameter regularization remains under 
development \cite{BelgeKilmerMiller:2002,ChenLuXuYang:2008, LuPereverzev:2009,
ItoJinTakeuchi:2011}. In \cite{BelgeKilmerMiller:2002} the $L$-hypersurface
was suggested for determining regularization parameters for
finite-dimensional linear systems, but without any theoretical
justification. In \cite{ChenLuXuYang:2008}, a multi-resolution analysis for ill-posed
linear operator equations was analyzed, and some convergence results were established. Lu
et al. \cite{LuPereverzev:2009} discussed the discrepancy principle for Hilbert space
scales, and derived some error estimates. However, the parameter selection is vastly nonunique
due to lack of constraints and thus not directly applicable in practice, for which later a
quasi-optimality criterion was suggested \cite{LuPereverzevShaoTautenhahn:2010}. Recently, the
authors \cite{ItoJinTakeuchi:2011} investigated the discrepancy principle and a
balancing principle for general convex variational
models. However, the nonuniqueness of the discrepancy
principle remains unresolved, and further, there is still no theory for the balancing principle for
multi-parameter regularization.

The present work extends our earlier work \cite{ItoJinTakeuchi:2011}, and
includes the following essential contributions.
We first revisit the balancing principle in \cite{ItoJinTakeuchi:2011} from the
viewpoint of augmented Tikhonov regularization \cite{JinZou:2009}, and established 
the equivalence. Then we derive a novel hybrid principle, the balanced discrepancy principle, by
incorporating constraints into the augmented approach, which partially resolves the
nonuniqueness issue. Further, a priori and a posterior error estimate are derived for both
principles. The estimate in Theorem \ref{thm:errest} was stated in \cite{ItoJinTakeuchi:2011} 
without a proof. Finally, we develop efficient algorithms for
implementing these principles, and briefly discuss their properties.

The rest of the paper is organized as follows. In \S \ref{sec:atikh}, we derive
the balancing principle and the new hybrid principle, and develop relevant error estimates. 
In \S \ref{sec:alg} we discuss efficient implementations of the two principles.
Finally, we provide some numerical results to illustrate the hybrid principle in \S \ref{sec:exp}.

\section{An augmented approach}\label{sec:atikh}
The augmented Tikhonov (a-Tikhonov) regularization is one principled framework
for choosing regularization parameters \cite{JinZou:2009}.
Here we describe the augmented approach for multi-parameter models, and
derive the balancing principle and a novel balanced discrepancy principle.

\subsection{Derivation of the principles}\label{ssec:atikh}
\subsubsection{Balancing principle}\label{sssect:atikh}
First we sketch the augmented approach. For the multi-parameter model
\eqref{eqn:multi}, it can be derived analogously from hierarchical Bayesian inference as
in \cite{JinZou:2009}, and the resulting augmented functional
$J(u,\tau,\bs\lambda)$ reads
\begin{equation*}
    J(u,\tau,\bs\lambda)=\tau\phi(u,g^\delta)+ \bs{\lambda}\cdot\bs{\psi}(u)+
    \bs{e}\cdot(\beta\bs\lambda-\alpha\ln\bs\lambda) + \beta_0\tau-\alpha_0 \ln\tau,
\end{equation*}
where the vector $\bs{e}$ is given by $\bs{e}=(1,1)^\mathrm{t}$. The functional 
$J(u,\tau,\bs\lambda)$ maximizes the posteriori probability density function
\begin{equation*}
p(u,\tau,\bs\lambda|g^\delta)\propto\, p(g^\delta|u,\tau,\bs\lambda)\,
p(u,\tau,\bs\lambda).
\end{equation*}
The functional $J(u,\tau,\bs\lambda)$ is derived under the assumption that the scalars
$\lambda_i$ and $\tau$ have Gamma distributions with known parameter pairs.
The parameter pairs $(\alpha,\beta)$ and $(\alpha_0,\beta_0)$ are related to the
shape parameters in the statistical priors on the prior precision $\lambda_i$ and noise
precision $\tau$, respectively. The special case $\beta_0=\beta=0$
is known as noninformative prior and customarily adopted in practice. Hence we focus our derivation
on this case. Upon letting $\eta_i=\frac{\lambda_i}{\tau}$, the
necessary optimality condition of any minimizer $(u_{\bs\eta}^\delta,\lambda_i,\tau)$ to
the a-Tikhonov functional $J(u,\tau,\{\lambda_i\})$ is given by
\begin{equation}\label{eqn:atikhopt_cc}
  \left\{\begin{aligned}
     u^\delta_{\bs\eta}&=\arg\min_{u\in\mathcal{C}}\;\left\{\phi(u,g^\delta)+\bs{\eta}\cdot\bs{\psi}(u)\right\},\\
        \lambda_i&=\dfrac{\alpha}{\psi_i(u^\delta_{\bs\eta})},\quad i=1,2,\\
        \tau&=\dfrac{\alpha_0}{\phi(u_{\bs\eta}^\delta,g^\delta)}.
  \end{aligned}\right.
\end{equation}
Now by rewriting the system with $\gamma=\frac{\alpha_0}{\alpha}$, we arrive at the following system for
$(u^\delta_{\bs\eta},{\bs\eta})$
\begin{equation}\label{eqn:atikhopt}
  \left\{\begin{aligned}
    &u^\delta_{\bs\eta}=\arg\min_{u\in\mathcal{C}}\;\left\{\phi(u,g^\delta)+{\bs\eta}\cdot{\bs\psi}(u)\right\},\\
    &\eta_i=\frac{1}{\gamma}\frac{\phi(u^\delta_{\bs\eta},g^\delta)
      }{\psi_i(u^\delta_{\bs\eta})},\ i=1,2.
  \end{aligned}\right.
\end{equation}
The optimality system \eqref{eqn:atikhopt} reveals the mechanism of the augmented
approach: it selects an optimal regularization parameter $\bs\eta$ in the model
\eqref{eqn:multi} by balancing the penalty $\bs\psi$ with the
fidelity $\phi$, from which the term balancing principle follows. We note the term
balancing principle here should not be confused with Lepskii's principle, which is also sometimes
called a balancing principle \cite{Mathe:2006}. The Lepskii's principle does require
a knowledge of noise level.

Next we characterize \eqref{eqn:atikhopt} using
the value function $F(\bs\eta)$ \cite{Ito:2010a} defined by
\begin{equation*}
F(\bs\eta)=\inf_{u\in\mathcal{C}} J_{\bs\eta}(u).
\end{equation*}
The function $F(\bs\eta)$ is continuous, and it is almost everywhere
differentiable, cf. Lemma \ref{lem:diff}. We denote by
$F_{\eta_i}$ the partial derivative of $F(\bs\eta)$ with respect to $\eta_i$. The proof
is analogous to \cite{Ito:2010a}, and hence omitted.

\begin{lem}\label{lem:diff}
The function $F(\bs\eta)$ is monotone and concave, and hence almost everywhere
differentiable. Further, if it is differentiable, then there holds
$F_{\eta_i}(\bs\eta)=\psi_i(u_{\bs\eta}^\delta).$
\end{lem}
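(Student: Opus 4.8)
The plan is to establish the three assertions---monotonicity, concavity, and the derivative formula---in that order, exploiting that $F(\bs\eta)$ is an infimum of a family of functions that are \emph{affine} in $\bs\eta$. Indeed, for each fixed $u\in\mathcal C$ the map $\bs\eta\mapsto J_{\bs\eta}(u)=\phi(u,g^\delta)+\bs\eta\cdot\bs\psi(u)$ is affine, and moreover nondecreasing in each $\eta_i$ because $\psi_i(u)\ge 0$. Taking the pointwise infimum over $u$ preserves both properties: an infimum of affine functions is concave, and an infimum of functions each nondecreasing in $\eta_i$ is nondecreasing in $\eta_i$. Concavity in particular yields almost-everywhere differentiability by Rademacher's theorem (or, in one variable along each line, by the classical fact that concave functions are differentiable off a countable set), so the first sentence of the lemma follows with essentially no computation.

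For the derivative formula, I would argue at a point $\bs\eta$ where $F$ is differentiable. Fix $i$ and let $u_{\bs\eta}^\delta$ be a minimizer defining $F(\bs\eta)$. For $t$ near $0$, testing the infimum at $\bs\eta+t\bs{e}_i$ with the competitor $u_{\bs\eta}^\delta$ gives the upper bound $F(\bs\eta+t\bs{e}_i)\le J_{\bs\eta}(u_{\bs\eta}^\delta)+t\,\psi_i(u_{\bs\eta}^\delta)=F(\bs\eta)+t\,\psi_i(u_{\bs\eta}^\delta)$; symmetrically, testing $F(\bs\eta)$ with a minimizer $u_{\bs\eta+t\bs{e}_i}^\delta$ for the perturbed problem gives $F(\bs\eta)\le F(\bs\eta+t\bs{e}_i)-t\,\psi_i(u_{\bs\eta+t\bs{e}_i}^\delta)$. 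Combining, for $t>0$ one obtains $\psi_i(u_{\bs\eta+t\bs{e}_i}^\delta)\le \frac{F(\bs\eta+t\bs{e}_i)-F(\bs\eta)}{t}\le \psi_i(u_{\bs\eta}^\delta)$, and the reversed chain of inequalities for $t<0$. Letting $t\to 0^\pm$, the outer terms both sandwich the one-sided difference quotients, which coincide at a point of differentiability and equal $F_{\eta_i}(\bs\eta)$; this forces $\psi_i(u_{\bs\eta+t\bs{e}_i}^\delta)\to\psi_i(u_{\bs\eta}^\delta)$ and pins the derivative to $\psi_i(u_{\bs\eta}^\delta)$.

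The main obstacle is the behavior of $\psi_i(u_{\bs\eta+t\bs e_i}^\delta)$ as $t\to 0$: the minimizer need not be unique, and $\psi_i$ need only be lower semicontinuous (the model is allowed to be nonsmooth), so one cannot simply invoke continuity of $u\mapsto\psi_i(u)$ along a minimizing branch. The sandwich above is what rescues this---it shows the quantity is squeezed between $F$-difference quotients regardless of which minimizer is selected---so at a point of differentiability the limit exists and equals $F_{\eta_i}(\bs\eta)$ without any uniqueness or continuity hypothesis on the minimizers; one only needs existence of a minimizer for each parameter value, which is the standing assumption behind \eqref{eqn:multi}. I would remark that this is exactly the one-parameter argument of \cite{Ito:2010a} applied coordinatewise, which is why the paper defers to that reference; in the write-up I would simply record the affineness observation, cite concavity $\Rightarrow$ a.e. differentiability, and present the sandwich inequality for the derivative identity.
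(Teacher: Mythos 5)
Your proof is correct and is precisely the standard value-function argument (infimum of affine, nondecreasing functions is concave and monotone; the derivative identity via the two-sided competitor/sandwich inequality) that the paper itself omits and attributes to the reference \cite{Ito:2010a}. Nothing is missing; note only that the final convergence claim $\psi_i(u_{\bs\eta+t\bs e_i}^\delta)\to\psi_i(u_{\bs\eta}^\delta)$ is a pleasant by-product rather than a needed step, since the identity $F_{\eta_i}(\bs\eta)=\psi_i(u_{\bs\eta}^\delta)$ already follows from testing both one-sided difference quotients against the fixed competitor $u_{\bs\eta}^\delta$.
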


Next we provide an alternative characterization
of \eqref{eqn:atikhopt}. First we define the function $\Phi_\gamma(\bs\eta)$ by
\begin{equation}\label{eqn:pPhi}
  \Phi_\gamma({\bs \eta})=\frac{F({\bs \eta})^{\gamma+2}}{\eta_1\eta_2}.
\end{equation}
The necessary optimality condition for
$\Phi_{\gamma}({\bs \eta})$, provided that $F({\bs \eta})$ is differentiable,
reads
\begin{equation*}
        \dfrac{\partial \Phi_\gamma}{\partial \eta_i}=\dfrac{F({\bs \eta})^{\gamma+1
        }}{\eta_1\eta_2}\dfrac{\left(-
        F({\bs \eta})+(2+\gamma)\eta_i F_{\eta_i}
        ({\bs \eta})\right)}{\eta_i}=0, \quad i=1,2
\end{equation*}
which, upon noting Lemma \ref{lem:diff}, is equivalent to
\begin{equation*}
  \left\{\begin{array}{l}
     -\phi(u_{\bs\eta}^\delta,g^\delta)  + (1+\gamma)\eta_1\psi_1(u_{\bs\eta}^\delta) - \eta_2 \psi_2(u_{\bs\eta}^\delta)=0,\\\\
     -\phi(u_{\bs\eta}^\delta,g^\delta)  - \eta_1 \psi_1(u_{\bs\eta}^\delta) + (1+\gamma)\eta_2\psi_2(u_{\bs\eta}^\delta)=0.
   \end{array}\right.
\end{equation*}
Solving the system with respect to $\eta_i$ yields $
\eta_i = \frac{1}{\gamma}\frac{\phi(u_{\bs\eta}^\delta,g^\delta)}{\psi_i(u_{\bs\eta}^\delta)}$.
Hence, the optimality system of the function $\Phi_\gamma$ coincides with that of the
functional $J(u,\tau,\bs\lambda)$. In summary, we have shown our first main result.
\begin{prop}
Let the value function $F(\bs\eta)$ be differentiable. Then all critical points of
the function $\Phi_\gamma$ are solutions to system \eqref{eqn:atikhopt}.
\end{prop}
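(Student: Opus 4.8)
The plan is to verify directly that the two systems — the critical-point equations for $\Phi_\gamma$ and the optimality system \eqref{eqn:atikhopt} — coincide, which the excerpt has essentially already laid out; my job is to organize this into a clean argument. First I would start from the definition \eqref{eqn:pPhi} of $\Phi_\gamma(\bs\eta)=F(\bs\eta)^{\gamma+2}/(\eta_1\eta_2)$ and compute $\partial\Phi_\gamma/\partial\eta_i$ using the product and chain rules, which is legitimate at any point where $F$ is differentiable (guaranteed by hypothesis, via Lemma \ref{lem:diff}). This yields
\begin{equation*}
\frac{\partial \Phi_\gamma}{\partial \eta_i}=\frac{F(\bs\eta)^{\gamma+1}}{\eta_1\eta_2}\cdot\frac{-F(\bs\eta)+(2+\gamma)\eta_i F_{\eta_i}(\bs\eta)}{\eta_i},\qquad i=1,2.
\end{equation*}
Since $F(\bs\eta)>0$ and $\eta_i>0$ at any admissible critical point, the prefactor $F(\bs\eta)^{\gamma+1}/(\eta_1\eta_2\eta_i)$ is nonzero, so the critical-point condition reduces to $-F(\bs\eta)+(2+\gamma)\eta_i F_{\eta_i}(\bs\eta)=0$ for $i=1,2$.

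Next I would substitute $F(\bs\eta)=J_{\bs\eta}(u_{\bs\eta}^\delta)=\phi(u_{\bs\eta}^\delta,g^\delta)+\eta_1\psi_1(u_{\bs\eta}^\delta)+\eta_2\psi_2(u_{\bs\eta}^\delta)$ and $F_{\eta_i}(\bs\eta)=\psi_i(u_{\bs\eta}^\delta)$ (Lemma \ref{lem:diff}) into these two equations. Writing $\phi=\phi(u_{\bs\eta}^\delta,g^\delta)$ and $\psi_i=\psi_i(u_{\bs\eta}^\delta)$ for brevity, this gives exactly the $2\times 2$ linear system displayed in the excerpt:
\begin{equation*}
\left\{\begin{array}{l}
-\phi+(1+\gamma)\eta_1\psi_1-\eta_2\psi_2=0,\\[2pt]
-\phi-\eta_1\psi_1+(1+\gamma)\eta_2\psi_2=0.
\end{array}\right.
\end{equation*}
I would then solve this system for the two unknowns $\eta_1\psi_1$ and $\eta_2\psi_2$: adding the two equations yields $-2\phi+\gamma(\eta_1\psi_1+\eta_2\psi_2)=0$, and subtracting yields $(\gamma+2)(\eta_1\psi_1-\eta_2\psi_2)=0$, hence $\eta_1\psi_1=\eta_2\psi_2$ and then $\eta_i\psi_i=\phi/\gamma$, i.e. $\eta_i=\frac{1}{\gamma}\frac{\phi(u_{\bs\eta}^\delta,g^\delta)}{\psi_i(u_{\bs\eta}^\delta)}$ for $i=1,2$. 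Together with the defining relation $u_{\bs\eta}^\delta=\arg\min_{u\in\mathcal{C}}J_{\bs\eta}(u)$, which holds by definition of $u_{\bs\eta}^\delta$, this is precisely system \eqref{eqn:atikhopt}, so every critical point of $\Phi_\gamma$ solves \eqref{eqn:atikhopt}.

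The argument is essentially a routine computation, so I do not expect a serious obstacle; the only points requiring a little care are (i) justifying that the prefactor can be divided out — this rests on the positivity $F(\bs\eta)>0$ and $\eta_i>0$, which should be noted explicitly (positivity of $F$ follows from $\phi\ge 0$, $\bs\psi\ge 0$, $\bs\eta>0$, provided $F$ is not identically zero, which one may assume at a nondegenerate critical point), and (ii) being explicit that the $2\times 2$ coefficient matrix $\begin{psmallmatrix}1+\gamma & -1\\ -1 & 1+\gamma\end{psmallmatrix}$ is invertible, with determinant $(1+\gamma)^2-1=\gamma(\gamma+2)>0$ for $\gamma>0$, so the solution for $(\eta_1\psi_1,\eta_2\psi_2)$ is unique. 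One should also remark that the converse inclusion need not hold in general (the proposition claims only that critical points are solutions), since a solution of \eqref{eqn:atikhopt} need only satisfy the first-order conditions, and that differentiability of $F$ at such a point is what is additionally assumed.
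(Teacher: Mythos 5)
Your proposal is correct and follows essentially the same route as the paper: differentiate $\Phi_\gamma$, invoke Lemma \ref{lem:diff} to replace $F_{\eta_i}$ by $\psi_i(u_{\bs\eta}^\delta)$, and solve the resulting $2\times 2$ linear system to recover $\eta_i=\frac{1}{\gamma}\frac{\phi(u_{\bs\eta}^\delta,g^\delta)}{\psi_i(u_{\bs\eta}^\delta)}$. Your added remarks on the positivity of the prefactor and the invertibility of the coefficient matrix are sensible refinements of the same computation, not a different argument.
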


\begin{rmk}
Two remarks on the function $\Phi_\gamma$ are in order. First, it
is very flexible in that the free-parameter $\gamma$ may be calibrated to achieve specific desirable
properties. Second, by the concavity in Lemma \ref{lem:diff}, $F(\bs\eta)$ is continuous
and thus the problem of minimizing $\Phi_\gamma$ over any bounded and closed region in
$\mathbb{R}^2_+$ is well defined. These observations remain valid for a general fidelity.
\end{rmk}

\subsubsection{Balanced discrepancy principle}
To solve stably and accurately problem \eqref{eqn:lininv}, one should use all
prior information, e.g., the noise level
$\phi(u^\dagger,g^\delta)= c:=\frac{1}{2}c_m^2\delta^2$ for some $c_m\geq1$, and other relevant knowledge, whenever it is
available. This can be realized by incorporating constraints into the augmented approach,
and then deriving the corresponding optimal system. For instance, for the
constraint $\phi(u,g^\delta)\leq c$, the Lagrangian approach gives
the following a-Tikhonov functional
\begin{equation*}
  \begin{aligned}
    J(u,\tau,\bs\lambda,\mu)=\tau\phi(u,g^\delta)+ &\bs{\lambda}\cdot\bs{\psi}(u)
      -\alpha\bs{e} \cdot\ln\bs\lambda
     -\alpha_0 \ln\tau + \tau\langle \phi(u,g^\delta)-c,\mu\rangle,
  \end{aligned}
\end{equation*}
where the unknown scalar $\mu \geq 0$ is the Lagrange multiplier for the inequality
constraint $\phi(u,g^\delta)\leq c$. Its optimality system reads
\begin{equation*}
  \left\{
  \begin{aligned}
   u^\delta_{\bs\eta}&=\arg\min_{u}\;\left\{\phi(u,g^\delta)+\bs{\eta}\cdot\bs{\psi}(u)
   +\langle\phi(u,g^\delta)-c,\mu\rangle\right\},\\
        \lambda_i&=\dfrac{\alpha}{\psi_i(u^\delta_{\bs\eta})},\quad i=1,2,\\
        \tau&=\dfrac{\alpha_0}{(1+\mu)\phi(u_{\bs\eta}^\delta,g^\delta)},\\
        c&\geq\phi(u_{\bs\eta}^\delta,g^\delta), \ \mu\geq 0.
  \end{aligned}\right.
\end{equation*}
Hence the constraint $\phi(x,g^\delta)\leq c$ and the balancing principle are both
fulfilled:
\begin{equation}\label{eqn:rbsys}
\gamma\eta_i\psi_i(u_{\bs\eta}^\delta)=(1+\mu)\phi(u_{\bs\eta}^\delta,g^\delta),\quad i=1,2.
\end{equation}

In the case of one single penalty, identity \eqref{eqn:rbsys} does not provide any
additional constraint since the multiplier $\mu$ is also unknown. We observe that
the active constraint, i.e.,
$\|Ku_{\bs\eta}^\delta-g^\delta\|=c_m\delta$, is exactly the discrepancy
principle \cite{EnglHankeNeubauer:1996}. The constraint is active under certain
conditions \cite{IvanovVasinTanana:2002}. Nonetheless, in case of multiple penalties, the discrepancy
principle alone cannot uniquely determine $\bs\eta$. Hence
we include also system \eqref{eqn:rbsys}, which might help resolve the
nonuniqueness issue. Upon simplification, this yields a new hybrid principle
\begin{equation} \label{aT}
  \left\{\begin{aligned}
     &\phi(u_{\bs\eta}^\delta,g^\delta)=\tfrac{1}{2}c_m^2\delta^2,\\
     &\eta_1\psi_1(u_{\bs\eta}^\delta)=\eta_2\psi_2(u_{\bs\eta}^\delta).
  \end{aligned}\right.
\end{equation}

The principle can be interpreted as the augmented approach with the
constraint $\{u:\|Ku-g^\delta\|=c_m\delta\}$, $c_m\geq 1$. Hence
it integrates the classical discrepancy principle
$\|Ku_{\bs\eta}^\delta-g^\delta\| = c_m\delta$ with the balancing
principle, and we shall name the new rule \eqref{aT} \textit{ balanced discrepancy
principle}. One noteworthy feature of \eqref{aT} is that it does not involve
the free parameter $\gamma$.

\subsection{Error estimates}\label{ssec:err}

Now we derive error estimates for \eqref{eqn:pPhi} and \eqref{aT}, capitalizing
on \cite{EnglHankeNeubauer:1996,BurgerOsher:2004,Hofmann:2007}. We discuss the following
three scenarios separately: hybrid principle \eqref{aT}, purely balancing
principle \eqref{eqn:pPhi} in Hilbert and Banach spaces. These theoretical results
partially justify their practical usages.

\subsubsection{Balanced discrepancy principle}

In this part, we discuss the consistency and an a priori error estimate for the hybrid
principle \eqref{aT}. To this end, we make the following assumption.
\begin{ass}\label{ass:phipsi}
There exists a $\tau$-topology such that for any $\bs\eta>0$, the functional
$J_{\bs\eta}(u)$ is coercive and its level set $\{u\in\mathcal{C}: J_{\bs\eta}(u)\leq
c\}$ for any $c>0$ is compact in $\tau$-topology, and the functionals $\phi$ and $\psi_i$
are $\tau$ lower semi-continuous.
\end{ass}
\begin{rmk}
The $\tau$-topology is naturally induced by the penalty functional $\bs\psi$, and it is not
arbitrarily in order to ensure the lower semicontinuity. % Also the continuity of the linear
%operator $K$ should also be understood with respect to $\tau$-topology.
\end{rmk}

Now we can state a consistency result. The line of proof is standard \cite{ItoJinTakeuchi:2011}, and thus omitted.
\begin{thm}\label{thm:condp}
Let Assumption \ref{ass:phipsi} be fulfilled, and
$t(\bs\eta)=\frac{\eta_1(\delta)}{\eta_1 (\delta)+\eta_2(\delta)}$. Let the sequence
$\{\bs\eta(\delta)\}_\delta$ be selected by \eqref{aT}. If a subsequence of
$\{\bs\eta(\delta)\}_\delta$ converges and $\widetilde{t}:=\lim_{\delta\rightarrow0}
t(\delta)\in (0,1)$, then the subsequence $\{u_{\bs\eta(\delta)}^\delta\}_\delta$
contains a subsequence $\tau$-converging to a
$[\widetilde{t},1-\widetilde{t}]^\mathrm{t}\cdot\bs\psi$-minimizing solution of
$Ku=g^\dagger$ and
\begin{equation*}
\lim_{\delta\rightarrow0}[t(\delta),1-t(\delta)]^\mathrm{t}\cdot\bs\psi(u_{\bs\eta}^\delta)
=[\widetilde{t},1-\widetilde{t}]^\mathrm{t}\cdot\bs\psi(u^\dagger).
\end{equation*}
\end{thm}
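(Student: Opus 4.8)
The plan is to follow the standard variational convergence argument for the discrepancy principle, adapted to the two-parameter situation, exploiting the compactness supplied by Assumption \ref{ass:phipsi}. First I would fix a subsequence of $\{\bs\eta(\delta)\}_\delta$ along which $t(\delta)\to\widetilde t\in(0,1)$, and abbreviate $\bs t(\delta)=[t(\delta),1-t(\delta)]^{\mathrm t}$, $\widetilde{\bs t}=[\widetilde t,1-\widetilde t]^{\mathrm t}$. The key observation is that, since $\eta_i(\delta)>0$, we may rescale: minimizing $J_{\bs\eta}$ is equivalent (up to the positive factor $\eta_1+\eta_2$) to minimizing $\phi(u,g^\delta)/(\eta_1+\eta_2)+\bs t(\delta)\cdot\bs\psi(u)$, so $u_{\bs\eta}^\delta$ is also the minimizer of a single-parameter Tikhonov functional with penalty $\bs t(\delta)\cdot\bs\psi$ and regularization parameter $(\eta_1+\eta_2)^{-1}$ — except that here the discrepancy constraint pins down $\phi(u_{\bs\eta}^\delta,g^\delta)$ directly, which is what makes the argument go through without controlling that parameter explicitly.

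The core estimate is the standard comparison with $u^\dagger$. Let $u^\ast$ denote a $\widetilde{\bs t}\cdot\bs\psi$-minimizing solution of $Ku=g^\dagger$ (existence follows from Assumption \ref{ass:phipsi} by the direct method, since $\phi(\cdot,g^\dagger)$ is $\tau$-l.s.c. and the level sets are $\tau$-compact). By minimality of $u_{\bs\eta}^\delta$ and the scaling above,
\begin{equation*}
\frac{\phi(u_{\bs\eta}^\delta,g^\delta)}{\eta_1+\eta_2}+\bs t(\delta)\cdot\bs\psi(u_{\bs\eta}^\delta)\leq \frac{\phi(u^\ast,g^\delta)}{\eta_1+\eta_2}+\bs t(\delta)\cdot\bs\psi(u^\ast).
\end{equation*}
Using the discrepancy identity $\phi(u_{\bs\eta}^\delta,g^\delta)=\tfrac12 c_m^2\delta^2$ on the left and $\phi(u^\ast,g^\delta)=\tfrac12\|Ku^\ast-g^\delta\|^2=\tfrac12\|g^\dagger-g^\delta\|^2\leq\tfrac12\delta^2$ on the right, and noting $c_m\geq1$, the two fidelity terms have the right sign to give
\begin{equation*}
\bs t(\delta)\cdot\bs\psi(u_{\bs\eta}^\delta)\leq \bs t(\delta)\cdot\bs\psi(u^\ast)+\frac{\tfrac12\delta^2-\tfrac12 c_m^2\delta^2}{\eta_1+\eta_2}\leq \bs t(\delta)\cdot\bs\psi(u^\ast).
\end{equation*}
This bounds $\bs t(\delta)\cdot\bs\psi(u_{\bs\eta}^\delta)$ uniformly (since $t(\delta)\to\widetilde t$ and $\bs\psi(u^\ast)$ is fixed), hence bounds $J_{\bs\eta}(u_{\bs\eta}^\delta)$, so by Assumption \ref{ass:phipsi} the family $\{u_{\bs\eta}^\delta\}$ lies in a $\tau$-compact set and a further subsequence $\tau$-converges to some $\bar u\in\mathcal C$. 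Then $\phi(\bar u,g^\dagger)\leq\liminf\phi(u_{\bs\eta}^\delta,g^\delta)=0$ by $\tau$-l.s.c. and $\delta\to0$ (using $\phi(u_{\bs\eta}^\delta,g^\delta)=\tfrac12 c_m^2\delta^2\to0$ plus a triangle-inequality bound on $\|Ku_{\bs\eta}^\delta-g^\dagger\|$), so $K\bar u=g^\dagger$. Combining $\tau$-l.s.c. of each $\psi_i$ with the convergence $t(\delta)\to\widetilde t$ and the displayed bound gives
\begin{equation*}
\widetilde{\bs t}\cdot\bs\psi(\bar u)\leq\liminf_{\delta\to0}\bs t(\delta)\cdot\bs\psi(u_{\bs\eta}^\delta)\leq\limsup_{\delta\to0}\bs t(\delta)\cdot\bs\psi(u_{\bs\eta}^\delta)\leq\widetilde{\bs t}\cdot\bs\psi(u^\ast),
\end{equation*}
so $\bar u$ is itself a $\widetilde{\bs t}\cdot\bs\psi$-minimizing solution and equality holds throughout, which yields the claimed limit $\lim_{\delta\to0}\bs t(\delta)\cdot\bs\psi(u_{\bs\eta}^\delta)=\widetilde{\bs t}\cdot\bs\psi(u^\dagger)$ (any such minimizing solution has the same $\widetilde{\bs t}\cdot\bs\psi$ value).

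The main obstacle, and the only point requiring care beyond bookkeeping, is handling the interplay between the varying weight $\bs t(\delta)$ and the lower semicontinuity: one cannot simply pass $\tau$-l.s.c. through $\bs t(\delta)\cdot\bs\psi(\cdot)$ because the coefficients move. I would deal with this by writing $\bs t(\delta)\cdot\bs\psi(u_{\bs\eta}^\delta)=\widetilde{\bs t}\cdot\bs\psi(u_{\bs\eta}^\delta)+(\bs t(\delta)-\widetilde{\bs t})\cdot\bs\psi(u_{\bs\eta}^\delta)$ and noting that $\bs t(\delta)-\widetilde{\bs t}\to0$ while $\bs\psi(u_{\bs\eta}^\delta)$ stays bounded (each component $\psi_i(u_{\bs\eta}^\delta)$ is controlled once $\widetilde t\in(0,1)$, since then $\min(t(\delta),1-t(\delta))$ is bounded below by a positive constant for small $\delta$, so the bound on $\bs t(\delta)\cdot\bs\psi(u_{\bs\eta}^\delta)$ transfers to each $\psi_i$); the perturbation term then vanishes in the limit, reducing everything to the fixed-weight l.s.c. of $\widetilde{\bs t}\cdot\bs\psi$. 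The requirement $\widetilde t\in(0,1)$ is exactly what is needed here — it prevents one penalty from being switched off in the limit — and it is also what makes the limiting object a genuine $\widetilde{\bs t}\cdot\bs\psi$-minimizing solution rather than a minimizer of a single $\psi_i$.
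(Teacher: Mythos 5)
Your proof is correct and follows exactly the standard consistency argument that the paper itself omits (it only remarks that ``the line of proof is standard'' and points to the earlier reference): comparison of $J_{\bs\eta}$ at $u_{\bs\eta}^\delta$ and at a $\widetilde{\bs t}\cdot\bs\psi$-minimizing solution, with the discrepancy identity and $c_m\geq 1$ killing the fidelity terms, then $\tau$-compactness from Assumption \ref{ass:phipsi} and lower semicontinuity. Your treatment of the varying weights --- splitting off $(\bs t(\delta)-\widetilde{\bs t})\cdot\bs\psi(u_{\bs\eta}^\delta)$ and using that $\widetilde t\in(0,1)$ gives a uniform bound on each $\psi_i(u_{\bs\eta}^\delta)$ --- is precisely the point where the hypothesis $\widetilde t\in(0,1)$ is needed, and I see no gap.
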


\begin{rmk}
The condition $\tilde{t}\in(0,1)$ in
Theorem \ref{thm:condp} amounts to the uniform boundedness of $\psi_i(
u_{\bs\eta}^\delta)$.
\end{rmk}

Next we have the following convergence rate, i.e., the distance between 
the approximation $u_{\bs\eta}^\delta$ and
the true solution $u^\dagger$ (in Bregman distance
\cite{BurgerOsher:2004}) in terms of the noise level $\delta$. We denote the 
subdifferential of a convex functional $\psi(u)$ at $u^\dagger$ by
$\partial\psi(u^\dagger)$, i.e.,
\begin{equation*}
   \partial\psi(u^\dagger)=\{\xi\in X^\ast: \psi(u)\geq\psi(u^\dagger)+\langle \xi,u-u^\dagger\rangle,\, \forall u\in X\},
\end{equation*}
and the Bregman distance $d_\xi(u,u^\dagger)$
for any $\xi\in\partial\psi(u^\dagger)$ is defined as
\begin{equation*}
d_\xi(u,u^\dagger):=\psi(u)-\psi(u^\dagger)-\langle
\xi,u-u^\dagger\rangle.
\end{equation*}

Now we can state a convergence rates result.
\begin{thm}\label{thm:disc}
Let the exact solution $u^\dagger$ satisfy the source condition: for any $t\in[0,1]$,
there exists a $w_t\in Y$ such that
$K^\ast w_t=\xi_t\in\partial\left([t,1-t]^\mathrm{t}\cdot\bs\psi(u^\dagger)\right).
$
Then for any $\bs\eta^\ast$ determined by the principle \eqref{aT} and with 
$t^\ast=t(\bs\eta^*)=\tfrac{\eta_1^\ast(\delta)}{\eta_1^\ast (\delta) +\eta_2^\ast 
(\delta)}\in[0,1]$, the following estimate holds
\begin{equation*}
d_{\xi_{t^*}}(u_{\bs\eta^\ast}^\delta,u^\dagger)\leq (1+c_m)\|w_{t^\ast}\|\delta.
\end{equation*}
\end{thm}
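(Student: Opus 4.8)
The plan is to run the classical convex-variational convergence-rate argument (in the spirit of \cite{BurgerOsher:2004,Hofmann:2007}), but to first collapse the two-penalty problem to a single effective convex penalty. First I would use the minimizing property of $u_{\bs\eta^\ast}^\delta$ in \eqref{eqn:multi}: testing $J_{\bs\eta^\ast}$ at the competitor $u^\dagger$ gives
\[
\phi(u_{\bs\eta^\ast}^\delta,g^\delta)+\bs\eta^\ast\cdot\bs\psi(u_{\bs\eta^\ast}^\delta)\le \phi(u^\dagger,g^\delta)+\bs\eta^\ast\cdot\bs\psi(u^\dagger).
\]
Since $\bs\eta^\ast$ satisfies the discrepancy part of \eqref{aT} we have $\phi(u_{\bs\eta^\ast}^\delta,g^\delta)=\tfrac12 c_m^2\delta^2\ge\tfrac12\delta^2=\phi(u^\dagger,g^\delta)$ because $c_m\ge1$, so the fidelity terms cancel in our favour and there remains $\bs\eta^\ast\cdot\bs\psi(u_{\bs\eta^\ast}^\delta)\le\bs\eta^\ast\cdot\bs\psi(u^\dagger)$. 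Moreover $\bs\eta^\ast\ne 0$ (otherwise $u_{\bs\eta^\ast}^\delta$ would be an exact fit, $\phi(u_{\bs\eta^\ast}^\delta,g^\delta)=0$), so dividing by $\eta_1^\ast+\eta_2^\ast>0$ and recalling $t^\ast=\tfrac{\eta_1^\ast}{\eta_1^\ast+\eta_2^\ast}$ yields the scalar inequality $[t^\ast,1-t^\ast]^{\mathrm t}\cdot\bs\psi(u_{\bs\eta^\ast}^\delta)\le [t^\ast,1-t^\ast]^{\mathrm t}\cdot\bs\psi(u^\dagger)$. This is the crux: the estimate now only involves the single convex functional $\psi_{t^\ast}:=[t^\ast,1-t^\ast]^{\mathrm t}\cdot\bs\psi$, which is exactly why the source condition is posed as a one-parameter family over $t\in[0,1]$ — one instantiates it at $t=t^\ast$.

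Next I would expand the Bregman distance by its definition,
\[
d_{\xi_{t^\ast}}(u_{\bs\eta^\ast}^\delta,u^\dagger)=\psi_{t^\ast}(u_{\bs\eta^\ast}^\delta)-\psi_{t^\ast}(u^\dagger)-\langle\xi_{t^\ast},u_{\bs\eta^\ast}^\delta-u^\dagger\rangle,
\]
and bound the three pieces. The first two combine to something $\le 0$ by the scalar inequality just derived. For the third, substitute the source representation $\xi_{t^\ast}=K^\ast w_{t^\ast}$, use the adjoint identity $\langle K^\ast w_{t^\ast},v\rangle=\langle w_{t^\ast},Kv\rangle$, and apply Cauchy--Schwarz to get $-\langle\xi_{t^\ast},u_{\bs\eta^\ast}^\delta-u^\dagger\rangle\le\|w_{t^\ast}\|\,\|Ku_{\bs\eta^\ast}^\delta-Ku^\dagger\|$. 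Finally, estimate the image residual by the triangle inequality together with the discrepancy identity $\|Ku_{\bs\eta^\ast}^\delta-g^\delta\|=c_m\delta$ and the noise level $\|Ku^\dagger-g^\delta\|=\delta$, obtaining $\|Ku_{\bs\eta^\ast}^\delta-Ku^\dagger\|\le(1+c_m)\delta$. Assembling these bounds gives $d_{\xi_{t^\ast}}(u_{\bs\eta^\ast}^\delta,u^\dagger)\le(1+c_m)\|w_{t^\ast}\|\delta$.

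The computation is largely routine; the one genuinely nonroutine point is the reduction step — observing that the balancing equation in \eqref{aT} plays no role here, that the discrepancy constraint together with $c_m\ge1$ suffices to absorb the fidelity term, and that dividing by $\eta_1^\ast+\eta_2^\ast$ is legitimate since the discrepancy constraint forbids $\bs\eta^\ast=0$ for $\delta>0$. One should also keep in mind that $\psi_{t^\ast}$ is convex as a convex combination of the convex $\psi_i$, so that the Bregman distance and the subgradient inclusion in the source condition are meaningful, and that the comparison with $u^\dagger$ presupposes $u^\dagger\in\mathcal C$, which is implicit in calling $u^\dagger$ a solution. I expect no real obstacle beyond this bookkeeping.
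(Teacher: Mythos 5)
Your proposal is correct and follows essentially the same route as the paper's proof: the key inequality $[t^\ast,1-t^\ast]^{\mathrm t}\cdot\bs\psi(u_{\bs\eta^\ast}^\delta)\le[t^\ast,1-t^\ast]^{\mathrm t}\cdot\bs\psi(u^\dagger)$ from the minimizing property plus the discrepancy constraint, then the Bregman expansion, source condition, Cauchy--Schwarz and triangle inequality. You merely spell out in more detail the first step (using $c_m\ge 1$ to absorb the fidelity terms and dividing by $\eta_1^\ast+\eta_2^\ast$), which the paper states tersely.
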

\begin{proof}
The line of proof is again well known, but we include a sketch for completeness.
In view of the minimizing property of the approximation $u_{\bs\eta^\ast}^\delta$ and the
constraint $\|Ku_{\bs\eta^\ast}^\delta-g^\delta\|=c_m\delta$, we have
$
[t^*,1-t^\ast]^\mathrm{t}\cdot\bs\psi(u_{\bs\eta^\ast}^\delta)\leq [t^*,1-t^\ast]^\mathrm{t}\cdot\bs\psi(u^\dagger).
$
The source condition implies that there exists a
$\xi_{t^\ast}\in\partial\left([t^\ast,1-t^\ast]^\mathrm{t}\cdot\bs\psi(u^\dagger)\right)$
and $w_{t^\ast}\in Y$ such that $\xi_{t^\ast}=K^\ast w_{t^\ast}$. From this
and the Cauchy-Schwarz inequality, we deduce
\begin{equation*}
\begin{aligned}
d_{\xi_{t^*}}(u_{\bs\eta^\ast}^\delta,u^\dagger)&=[t^\ast,1-t^\ast]^\mathrm{t}\cdot\bs\psi(u_{\bs\eta^\ast}^\delta)
-[t^\ast,1-t^\ast]^\mathrm{t}\cdot\bs\psi(u^\dagger)-\langle\xi_{t^*},u_{\bs\eta^\ast}^\delta-u^\dagger\rangle\\
&\leq-\langle\xi_{t^*},u_{\bs\eta^\ast}^\delta-u^\dagger\rangle
=-\langle K^\ast w_{t^*},u_{\bs\eta^\ast}^\delta-u^\dagger\rangle\\
&=-\langle w_{t^*},K(u_{\bs\eta^\ast}^\delta-u^\dagger)\rangle
\leq\|w_{t^*}\|\|K(u_{\bs\eta^\ast}^\delta-u^\dagger)\|\\
&\leq\|w_{t^*}\|\left(\|Ku_{\bs\eta^\ast}^\delta-g^\delta\|+\|g^\delta-Ku^\dagger\|\right)\leq
(1+c_m)\|w_{t^*}\|\delta.
\end{aligned}
\end{equation*}
This shows the desired estimate.
\end{proof}

\begin{rmk}
In Theorem \ref{thm:disc}, the order of convergence relies solely on the
constraint  $\|Ku_{\bs\eta}^\delta-g^\delta\|=c_m\delta$,
while the weight $t^\ast$ in the estimate is determined by the balancing
principle. Hence the reduced system \eqref{eqn:rbsys} does help
resolve the vast nonuniqueness issue in the discrepancy principle.
\end{rmk}

\subsubsection{Balancing principle in Hilbert spaces}
We derive a posteriori estimates for the balancing principle $\Phi_\gamma$
\eqref{eqn:pPhi}, i.e., the distance between the approximation $u_{\bs\eta^\ast}^\delta$ and the
exact solution $u^\dagger$ in terms of the noise level $\delta=\|g^\delta-g^\dagger\|$
and the realized residual $\delta_*=\|Ku_{\bs\eta^\ast}^\delta-g^\delta\|$ etc. We first
treat quadratic regularizations $\psi_i(u)=\frac{1}{2}\|L_iu\|^2$ with linear
operators $L_i$ fulfilling $\ker(L_i)\cap\ker(K)=\{0\}$, $i=1,2$, and each induces a
semi-norm. One typical choice is that $\psi_1$ and $\psi_2$ impose the $\mathrm{L}^2$-norm
and higher-order Sobolev smoothness, e.g., $\psi_1(u)=\frac{1}{2}\|u\|_{\mathrm{L}^2}^2$ and
$\psi_2(u)=\frac{1}{2}\|u\|_{\mathrm{H}^1}^2$. We shall utilize a weighted (semi-)norm
$\|\cdot\|_t$ defined by
\begin{equation*}
 \|u\|_t^2=t\|L_1u\|^2+(1-t)\|L_2u\|^2,
\end{equation*}
where the weight $t\equiv t(\bs\eta)\in[0,1]$ is defined as before, and by
$Q_t=tL_1^\ast L_1+(1-t)L_2^\ast L_2$ and $L_t= Q_t^\frac{1}{2}$ and
$\widetilde{K}_t=KL_t^{-1}$. Clearly, $\|u\|_t=\|L_tu\|$. We note that the adjoint
$K^\ast$ (and hence $\widetilde{K}_t^\ast$) depends on the value $t$.
\begin{thm}
Let $\mu\in (0,1]$ be fixed, and the exact solution $u^\dagger$ satisfy the source
condition: for any $t\in[0,1]$, there exists a $w_t\in Y$ such that
$L_tu^\dagger=(\widetilde{K}_t^\ast\widetilde{K}_t)^\mu w_t$. Then for any parameter
$\bs\eta^\ast$ selected by \eqref{eqn:pPhi} with $t^\ast=t(\bs\eta^*)=\tfrac{\eta_1^\ast(\delta)}{\eta_1^\ast 
(\delta) +\eta_2^\ast (\delta)}$, the following estimate holds
\begin{equation*}
   \|u_{\bs\eta^\ast}^\delta-u^\dagger\|_{t^\ast}\leq
   C\left(\|w_{t^\ast}\|^\frac{1}{2\mu+1}+\frac{F^\frac{2+\gamma}{4}(\delta^\frac{2}{2\mu+1}\bs
   e)}{F^\frac{2+\gamma}{4}(\bs\eta^\ast)}\right)\max\{\delta_\ast,\delta\}^\frac{2\mu}{2\mu+1}.
\end{equation*}
\end{thm}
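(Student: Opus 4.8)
The plan is to reduce the two-parameter, nonsmooth-looking estimate to the classical single-parameter source-condition analysis for the weighted regularization $\psi_t(u)=\frac12\|u\|_t^2$, and then to bound the ``realized'' regularization parameter $\eta^\ast:=\eta_1^\ast+\eta_2^\ast$ from below in terms of the noise level by using the defining property of the balancing principle \eqref{eqn:pPhi}. First I would observe that, for fixed $t=t^\ast$, minimizing $J_{\bs\eta^\ast}(u)=\phi(u,g^\delta)+\eta_1^\ast\psi_1(u)+\eta_2^\ast\psi_2(u)$ over $\mathcal C$ is (assuming $\mathcal C=X$, or that the constraint is inactive) the same as minimizing $\phi(u,g^\delta)+\eta^\ast\psi_{t^\ast}(u)=\frac12\|Ku-g^\delta\|^2+\frac{\eta^\ast}{2}\|L_{t^\ast}u\|^2$, whose minimizer is the standard Tikhonov solution $u^\delta_{\bs\eta^\ast}=L_{t^\ast}^{-1}(\widetilde K_{t^\ast}^\ast\widetilde K_{t^\ast}+\eta^\ast I)^{-1}\widetilde K_{t^\ast}^\ast g^\delta$ after the substitution $v=L_{t^\ast}u$. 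With the source condition $L_{t^\ast}u^\dagger=(\widetilde K_{t^\ast}^\ast\widetilde K_{t^\ast})^\mu w_{t^\ast}$, the textbook estimate (Engl--Hanke--Neubauer) gives
\begin{equation*}
\|u^\delta_{\bs\eta^\ast}-u^\dagger\|_{t^\ast}=\|L_{t^\ast}(u^\delta_{\bs\eta^\ast}-u^\dagger)\|\le C\Bigl(\|w_{t^\ast}\|\,(\eta^\ast)^\mu+\frac{\delta}{\sqrt{\eta^\ast}}\Bigr),
\end{equation*}
valid for $0<\mu\le 1$ by standard spectral/interpolation arguments, with $C$ independent of $t^\ast$ (this is where $\mu\le 1$ is used, so the qualification of the regularization method is not exceeded).

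Second, I would convert this into an a posteriori bound. One route is to notice that if $\eta^\ast\ge\delta^{2/(2\mu+1)}$, then balancing the two terms yields $\|u^\delta_{\bs\eta^\ast}-u^\dagger\|_{t^\ast}\le C(\|w_{t^\ast}\|+1)(\eta^\ast)^\mu$, and conversely in the opposite regime the $\delta/\sqrt{\eta^\ast}$ term dominates; but to get the clean form with $\max\{\delta_\ast,\delta\}^{2\mu/(2\mu+1)}$ I would instead relate $\eta^\ast$ to the realized residual $\delta_\ast=\|Ku^\delta_{\bs\eta^\ast}-g^\delta\|$ via the optimality identity. From the balancing principle $\gamma\eta_i^\ast\psi_i(u^\delta_{\bs\eta^\ast})=\phi(u^\delta_{\bs\eta^\ast},g^\delta)=\frac12\delta_\ast^2$ we get $\gamma\,\eta^\ast\psi_{t^\ast}(u^\delta_{\bs\eta^\ast})=\frac12\delta_\ast^2$ after summing (using that $\psi_{t^\ast}=t^\ast\psi_1+(1-t^\ast)\psi_2$ and $t^\ast=\eta_1^\ast/\eta^\ast$), i.e. $\eta^\ast\|L_{t^\ast}u^\delta_{\bs\eta^\ast}\|^2=\delta_\ast^2/\gamma$. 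Combined with the crude a priori bound $\|L_{t^\ast}u^\delta_{\bs\eta^\ast}\|\lesssim\|L_{t^\ast}u^\dagger\|+\|u^\delta_{\bs\eta^\ast}-u^\dagger\|_{t^\ast}$ and a standard residual bound $\delta_\ast\lesssim\delta+\sqrt{\eta^\ast}\|L_{t^\ast}u^\dagger\|$, one can solve for $\eta^\ast$ (or rather for the whole right-hand side) in terms of $\delta$, $\delta_\ast$; the self-consistent choice gives the exponent $2\mu/(2\mu+1)$.

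Third — and this is the genuinely new ingredient that must carry the factor $F^{(2+\gamma)/4}(\delta^{2/(2\mu+1)}\bs e)/F^{(2+\gamma)/4}(\bs\eta^\ast)$ — I would use the fact that $\bs\eta^\ast$ \emph{minimizes} $\Phi_\gamma(\bs\eta)=F(\bs\eta)^{\gamma+2}/(\eta_1\eta_2)$ over the relevant region, and in particular $\Phi_\gamma(\bs\eta^\ast)\le\Phi_\gamma(\delta^{2/(2\mu+1)}\bs e)$. Writing $\eta_1^\ast\eta_2^\ast=t^\ast(1-t^\ast)(\eta^\ast)^2\le\frac14(\eta^\ast)^2$, this gives $(\eta^\ast)^2\ge \frac14\,\eta^\ast_1\eta^\ast_2\cdot 4 \ge$ a lower bound of the form
\begin{equation*}
(\eta^\ast)^2\ge c\,\delta^{4/(2\mu+1)}\,\frac{F(\bs\eta^\ast)^{\gamma+2}}{F(\delta^{2/(2\mu+1)}\bs e)^{\gamma+2}},
\end{equation*}
hence $1/\sqrt{\eta^\ast}\le C\,\delta^{-1/(2\mu+1)}\,F^{(\gamma+2)/4}(\delta^{2/(2\mu+1)}\bs e)/F^{(\gamma+2)/4}(\bs\eta^\ast)$, and similarly $(\eta^\ast)^\mu$ is controlled. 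Substituting into the EHN estimate, the $\delta/\sqrt{\eta^\ast}$ term becomes $C\,\delta^{2\mu/(2\mu+1)}\cdot F^{(\gamma+2)/4}(\delta^{2/(2\mu+1)}\bs e)/F^{(\gamma+2)/4}(\bs\eta^\ast)$ and the $\|w_{t^\ast}\|(\eta^\ast)^\mu$ term, after a matching manipulation (using an \emph{upper} bound on $\eta^\ast$ coming from $\Phi_\gamma(\bs\eta^\ast)\le\Phi_\gamma(\delta^{2/(2\mu+1)}\bs e)$ together with $F$ monotone), yields $C\|w_{t^\ast}\|^{1/(2\mu+1)}\max\{\delta_\ast,\delta\}^{2\mu/(2\mu+1)}$. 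Collecting the two contributions and replacing $\delta$ by $\max\{\delta_\ast,\delta\}$ wherever the residual enters gives the claimed bound.

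The main obstacle I anticipate is \textbf{keeping the constant $C$ genuinely independent of $t^\ast$} while running the classical EHN spectral estimate for the $t$-dependent operator $\widetilde K_{t^\ast}=KL_{t^\ast}^{-1}$: the operators $L_t$, $\widetilde K_t$, and the adjoint $\widetilde K_t^\ast$ all vary with $t$, so the filter-function bounds $\sup_{\sigma\ge0}\sigma^\mu/(\sigma+\eta)\le C\eta^{\mu-1}$ etc. must be applied with $t$-uniform constants — which works because these bounds depend only on $\mu$, not on the operator — but one must be careful that the source element $w_{t^\ast}$ is the one appearing in the statement and that $L_tu^\dagger$ indeed lies in the range of $(\widetilde K_t^\ast\widetilde K_t)^\mu$ uniformly. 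A secondary technical point is handling the feasible-set constraint $u\in\mathcal C$: if $\mathcal C\ne X$ the projection destroys the clean spectral representation, so I would either assume $\mathcal C=X$ for this theorem (consistent with the Hilbert-space quadratic setting) or argue that the variational inequality for the constrained minimizer still yields the same one-sided estimate $d(u^\delta_{\bs\eta^\ast},u^\dagger)\le\ldots$ after an extra nonnegative term is dropped.
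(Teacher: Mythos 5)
Your first half is essentially the paper's argument: the propagation error $u^\delta_{\bs\eta^\ast}-u_{\bs\eta^\ast}$ is bounded by $\delta/(2\sqrt{s})$ with $s=\eta_1^\ast+\eta_2^\ast$, and the minimality $\Phi_\gamma(\bs\eta^\ast)\le\Phi_\gamma(\delta^{2/(2\mu+1)}\bs e)$ turns $1/\sqrt{\max_i\eta_i^\ast}$ into $\delta^{-1/(2\mu+1)}F^{(2+\gamma)/4}(\delta^{2/(2\mu+1)}\bs e)/F^{(2+\gamma)/4}(\bs\eta^\ast)$, producing the second term of the claimed bound. That part is sound.

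The gap is in how you treat the source-condition term. You start from the a priori bound $C(\|w_{t^\ast}\|(\eta^\ast)^\mu+\delta/\sqrt{\eta^\ast})$ and then claim that an \emph{upper} bound on $\eta^\ast$, extracted from $\Phi_\gamma(\bs\eta^\ast)\le\Phi_\gamma(\delta^{2/(2\mu+1)}\bs e)$ together with monotonicity of $F$, converts $\|w_{t^\ast}\|(\eta^\ast)^\mu$ into $\|w_{t^\ast}\|^{1/(2\mu+1)}\max\{\delta_\ast,\delta\}^{2\mu/(2\mu+1)}$. But that inequality reads $\eta_1^\ast\eta_2^\ast\ge\widehat\eta_1\widehat\eta_2\,F^{2+\gamma}(\bs\eta^\ast)/F^{2+\gamma}(\widehat{\bs\eta})$; since $F$ is increasing, it only bounds $\eta_1^\ast\eta_2^\ast$ (hence $\max_i\eta_i^\ast$) from \emph{below} and gives no upper bound on $\eta^\ast$ at all. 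Likewise your "self-consistent solve" from $\gamma\eta^\ast\|L_{t^\ast}u^\delta_{\bs\eta^\ast}\|^2\sim\delta_\ast^2$ would give $\eta^\ast\sim\delta_\ast^2$ when $\|L_{t^\ast}u^\delta_{\bs\eta^\ast}\|$ stays bounded away from zero, which is not the order $\delta^{2/(2\mu+1)}$ you need, and in any case cannot manufacture the exponent $1/(2\mu+1)$ on $\|w_{t^\ast}\|$. The paper avoids estimating $\eta^\ast$ for this term altogether: it writes $L_{t^\ast}(u_{\bs\eta^\ast}-u^\dagger)=s(\widetilde K_{t^\ast}^\ast\widetilde K_{t^\ast}+sI)^{-1}(\widetilde K_{t^\ast}^\ast\widetilde K_{t^\ast})^\mu w_{t^\ast}$ and applies the moment (interpolation) inequality to split this as $\|s(\widetilde K_{t^\ast}^\ast\widetilde K_{t^\ast}+sI)^{-1}(\widetilde K_{t^\ast}^\ast\widetilde K_{t^\ast})^{\mu+1/2}w_{t^\ast}\|^{2\mu/(2\mu+1)}\|s(\widetilde K_{t^\ast}^\ast\widetilde K_{t^\ast}+sI)^{-1}w_{t^\ast}\|^{1/(2\mu+1)}$; the first factor is then identified, via $s(\widetilde K_{t^\ast}\widetilde K_{t^\ast}^\ast+sI)^{-1}g^\delta=g^\delta-Ku^\delta_{\bs\eta^\ast}$, with the realized residual $\delta_\ast$ (up to a $\delta$ perturbation), and the second factor gives $\|w_{t^\ast}\|^{1/(2\mu+1)}$. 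That interpolation step is the missing ingredient; without it your second and third steps do not close.
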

\begin{proof}
We decompose the error $u_{\bs\eta}^\delta-u^\dagger$ into
$u_{\bs\eta}^\delta-u^\dagger=(u_{\bs\eta}^\delta-u_{\bs\eta})+(u_{\bs\eta}-u^\dagger)$,
and bound the two terms separately. First we estimate the error
$u_{\bs\eta}^\delta-u_{\bs\eta}$. It follows from the optimality conditions for
$u_{\bs\eta}$ and $u_{\bs\eta}^\delta$ that
\begin{equation*}
  (K^\ast K+\eta_1L^\ast_1L_1+\eta_2L_2^\ast L_2)(u_{\bs\eta}-u_{\bs\eta}^\delta)=K^\ast(g^\dagger-g^\delta).
\end{equation*}
Multiplying the identity with $u_{\bs\eta}-u_{\bs\eta}^\delta$ and using the
Cauchy-Schwarz and Young's inequalities give
\begin{equation*}
  \begin{aligned}
     &\|K(u_{\bs\eta}^\delta-u_{\bs\eta})\|^2+\eta_1\|L_1(u_{\bs\eta}^\delta-u_{\bs\eta})\|^2
      +\eta_2\|L_2(u_{\bs\eta}^\delta-u_{\bs\eta})\|^2\\
     =&\langle K(u_{\bs\eta}^\delta-u_{\bs\eta}),g^\dagger-g^\delta\rangle\\
     \leq&\|K(u_{\bs\eta}^\delta-u_{\bs\eta})\|^2+\tfrac{1}{4}\|g^\dagger-g^\delta\|^2.
  \end{aligned}
\end{equation*}
Next let $s=\eta_1+\eta_2$. Then we get
\begin{equation*}
   \|u_{\bs\eta}^\delta-u_{\bs\eta}\|_{t}\leq\frac{\|g^\delta-g^\dagger\|}{2\sqrt{s}}\leq
   \frac{\delta}{2\sqrt{s}}\leq\frac{\delta}{2\sqrt{\max_i\eta_i}}.
\end{equation*}
Meanwhile, the minimizing property of $\bs\eta^\ast$ to the rule $\Phi_\gamma$ implies
that for any $\widehat{\bs\eta}$
\begin{equation*}
   \frac{F^{2+\gamma}(\bs\eta^\ast)}{\max(\eta_i^\ast)^2}\leq\frac{F^{2+\gamma}(\bs\eta^\ast)}{\eta_1^\ast\eta_2^\ast}\leq
   \frac{F^{2+\gamma}(\widehat{\bs\eta})}{\widehat{\eta}_1\widehat{\eta}_2}.
\end{equation*}
In particular, we may take $\widehat{\bs\eta}=\delta^\frac{2}{2\mu+1}\bs{e}$ and arrive at
\begin{equation*}
\|u_{\bs\eta^\ast}^\delta-u_{\bs\eta}\|_{t^\ast}\leq\frac{F^{\frac{2+\gamma}{4}}(\delta^\frac{2}{2\mu+1}\bs
e)}{F^{\frac{2+\gamma}{4}}(\bs\eta^\ast)}\delta^\frac{2\mu}{2\mu+1}.
\end{equation*}
Next we estimate the approximation error $u_{\bs\eta}-u^\dagger$. To this end, we observe
\begin{equation*}
\begin{aligned}
u_{\bs\eta}-u^\dagger&=(K^\ast K+\eta_1L_1^\ast L_1+\eta_2L_2^\ast
L_2)^{-1}(\eta_1L_1^\ast L_1+\eta_2L_2^\ast L_2)u^\dagger\\
&=s(K^\ast{}K+s Q_t)^{-1}Q_tu^\dagger=sL_t^{-1}(L_t^{-1}K^\ast KL_t^{-1}+sI)^{-1}L_tu^\dagger.
\end{aligned}
\end{equation*}
Hence,
$
  L_t(u_{\bs\eta}-u^\dagger)=s(\widetilde{K}_t^\ast\widetilde{K}_t+sI)L_tu^\dagger.
$
Consequently, we deduce from the source condition and the moment inequality
\cite{EnglHankeNeubauer:1996}
\begin{equation*}
\begin{aligned}
\|u_{\bs\eta}-u^\dagger\|_t&=\|L_t(u_{\bs\eta}-u^\dagger)\|
=\|s(\widetilde{K}_t^\ast\widetilde{K}_t+s I)^{-1}L_tu^\dagger\|\\
&=\|s(\widetilde{K}_t^\ast\widetilde{K}_t+s I)^{-1}(\widetilde{K}_t^\ast\widetilde{K}_t)^{\mu}w_t\|\\
&\leq\|s(\widetilde{K}_t^\ast\widetilde{K}_t+sI)^{-1}
(\widetilde{K}^\ast_t\widetilde{K}_t)^{\frac{1}{2}+\mu}w_t\|^{\frac{2\mu}{2\mu+1}}\|
s(\widetilde{K}_t^\ast\widetilde{K}_t+sI)^{-1}w_t\|^\frac{1}{2\mu+1}\\
&=\|s(\widetilde{K}_t^\ast\widetilde{K}_t+sI)^{-1}\widetilde{K}_tL_tu^\dagger\|^\frac{2\mu}{2\mu+1}\|s(\widetilde{K}_t^\ast\widetilde{K}_t+sI)^{-1}w_t\|\\
&\leq{}c(\|s(\widetilde{K}_t\widetilde{K}_t^\ast+sI)^{-1}g^\delta\|+\|s(\widetilde{K}_t\widetilde{K}_t^\ast+sI)^{-1}
(g^\delta-g^\dagger)\|)^\frac{2\mu}{2\mu+1}\|w_t\|^\frac{1}{2\mu+1},
\end{aligned}
\end{equation*}
where the constant $c$ depends only on the maximum of $r_s(t)=\frac{s}{s+t}$ over
$[0,\|\widetilde{K}_t\|^2]$. Further, we note the relation
\begin{equation*}
\begin{aligned}
s(\widetilde{K}_t\widetilde{K}_t^\ast+sI)^{-1}g^\delta
&=g^\delta-(\widetilde{K}_tK_t^\ast+sI)^{-1}\widetilde{K}_t\widetilde{K}_t^\ast g^\delta\\
&=g^\delta-\widetilde{K}(\widetilde{K}_t^\ast\widetilde{K}_t+sI)^{-1}\widetilde{K}_t^\ast g^\delta\\
&=g^\delta-K(K^\ast K+sQ_t)^{-1}K^\ast g^\delta=g^\delta-Ku_{\bs\eta}^\delta.
\end{aligned}
\end{equation*}
Hence, we deduce
\begin{equation*}
\|u_{\bs\eta^\ast}-u^\dagger\|_{t^\ast}\leq
c(\delta_\ast+c\delta)^\frac{2\mu}{2\mu+1}\|w_t\|^\frac{1}{2\mu+1}\leq
c_1\max\{\delta_\ast,\delta\}^\frac{2\mu}{2\mu+1}.
\end{equation*}
By combining these two estimates, we arrive at the desired inequality.
\end{proof}

\subsubsection{Balancing principle in Banach space}
Lastly, we turn to the balancing principle for general convex regularization
$\bs\psi$. We first recall the following technical lemma \cite{JinLorenz:2010} for single
convex regularization $\psi$. The first estimates the propagation error, and the second
plays the role of a triangle inequality.

\begin{lem}[{\cite{JinLorenz:2010}}]\label{lem:resest}
Let the exact solution $u^\dagger$ satisfy the following source condition: there exists a
$w\in Y$ such that $K^\ast w=\xi\in\partial\psi(u^\dagger)$, and let
$\xi_\eta=K^\ast(g^\dagger-Ku_\eta)/\eta$. Then there hold
\begin{equation*}
  \begin{aligned}
     &d_{\xi_\eta}(u_\eta^\delta,u_\eta)\leq \frac{\delta^2}{2\eta}\quad \mbox{and}\quad \|K(u_\eta^\delta-u_\eta)\|\leq 2\delta,\\
     &\left|d_\xi(u_\eta^\delta,u^\dagger)-(d_{\xi_\eta}(u_\eta^\delta,u_\eta)+d_\xi(u_\eta,u^\dagger))\right|\leq 6\|w\|\delta.
  \end{aligned}
\end{equation*}
\end{lem}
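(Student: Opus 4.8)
The plan is to run the standard Bregman-distance machinery off the variational characterizations of $u_\eta^\delta$ and $u_\eta$. Throughout write $\phi(u,g)=\tfrac12\|Ku-g\|^2$, and note first that the optimality condition for $u_\eta$ reads $K^\ast(Ku_\eta-g^\dagger)+\eta\,\zeta=0$ for some $\zeta\in\partial\psi(u_\eta)$, so $\xi_\eta=K^\ast(g^\dagger-Ku_\eta)/\eta$ indeed lies in $\partial\psi(u_\eta)$ and all the Bregman distances below are well defined and nonnegative.

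The first two estimates come together. Testing the minimality of $u_\eta^\delta$ against $u_\eta$ gives $\phi(u_\eta^\delta,g^\delta)+\eta\psi(u_\eta^\delta)\le\phi(u_\eta,g^\delta)+\eta\psi(u_\eta)$. Insert this into $\eta\,d_{\xi_\eta}(u_\eta^\delta,u_\eta)=\eta\psi(u_\eta^\delta)-\eta\psi(u_\eta)-\langle K^\ast(g^\dagger-Ku_\eta),u_\eta^\delta-u_\eta\rangle$ and rearrange the quadratic terms by completing the square in $K(u_\eta^\delta-u_\eta)$; the plan is to arrive at
\[
  \eta\,d_{\xi_\eta}(u_\eta^\delta,u_\eta)\le-\tfrac12\|K(u_\eta^\delta-u_\eta)\|^2+\langle g^\delta-g^\dagger,K(u_\eta^\delta-u_\eta)\rangle .
\]
Bounding the last inner product by $\delta\|K(u_\eta^\delta-u_\eta)\|$ and using $-\tfrac12 x^2+\delta x\le\tfrac12\delta^2$ yields $d_{\xi_\eta}(u_\eta^\delta,u_\eta)\le\delta^2/(2\eta)$; discarding instead the nonnegative left-hand side gives $\tfrac12\|K(u_\eta^\delta-u_\eta)\|^2\le\delta\|K(u_\eta^\delta-u_\eta)\|$, i.e.\ $\|K(u_\eta^\delta-u_\eta)\|\le2\delta$.

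For the third estimate, expand all three Bregman distances from the definition: the terms $\psi(u_\eta^\delta)$ and $\psi(u^\dagger)$ cancel and the linear parts collapse, so that
\[
  d_\xi(u_\eta^\delta,u^\dagger)-\bigl(d_{\xi_\eta}(u_\eta^\delta,u_\eta)+d_\xi(u_\eta,u^\dagger)\bigr)=\langle\xi_\eta-\xi,\,u_\eta^\delta-u_\eta\rangle .
\]
Since $\xi=K^\ast w$ and $\xi_\eta=K^\ast(g^\dagger-Ku_\eta)/\eta$, the right-hand side equals $\langle(g^\dagger-Ku_\eta)/\eta-w,\,K(u_\eta^\delta-u_\eta)\rangle$, whose modulus is at most $\bigl(\eta^{-1}\|Ku_\eta-g^\dagger\|+\|w\|\bigr)\cdot2\delta$ by Cauchy--Schwarz and the bound just obtained. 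It then remains to show $\|Ku_\eta-g^\dagger\|\le2\eta\|w\|$: testing the minimality of $u_\eta$ for the exact-data functional against $u^\dagger$ (using $Ku^\dagger=g^\dagger$) gives $\tfrac12\|Ku_\eta-g^\dagger\|^2\le\eta(\psi(u^\dagger)-\psi(u_\eta))$, while nonnegativity of $d_\xi(u_\eta,u^\dagger)$ gives $\psi(u^\dagger)-\psi(u_\eta)\le\langle\xi,u^\dagger-u_\eta\rangle=\langle w,g^\dagger-Ku_\eta\rangle\le\|w\|\,\|Ku_\eta-g^\dagger\|$; combining yields the claimed residual bound, hence the overall constant $6\|w\|\delta$.

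The steps involving only $u_\eta^\delta$ are routine square-completions, and the telescoping identity is purely algebraic; the delicate point I expect to be the crux is controlling $\xi_\eta-\xi$ rather than merely $K(u_\eta^\delta-u_\eta)$. This is precisely where one must re-enter the optimality-plus-source-condition argument for the \emph{noise-free} problem to extract $\|Ku_\eta-g^\dagger\|\le2\eta\|w\|$; without that a priori estimate the third inequality does not close.
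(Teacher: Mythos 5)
Your proof is correct and complete: the square-completion argument for the first two bounds, the telescoping identity $d_\xi(u_\eta^\delta,u^\dagger)-d_{\xi_\eta}(u_\eta^\delta,u_\eta)-d_\xi(u_\eta,u^\dagger)=\langle\xi_\eta-\xi,u_\eta^\delta-u_\eta\rangle$, and the a priori residual bound $\|Ku_\eta-g^\dagger\|\leq 2\eta\|w\|$ all check out and yield exactly the constant $6\|w\|\delta$. The paper itself gives no proof (the lemma is quoted from the cited reference), and your argument is essentially the standard one used there, so there is nothing further to reconcile.
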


Now we can state an estimate for the balancing principle \eqref{eqn:pPhi} in Banach spaces.
The estimate has been stated in \cite{ItoJinTakeuchi:2011} but without a proof.
\begin{thm}\label{thm:errest}
Let the exact solution $u^\dagger$ satisfy the source condition: for any $t\in[0,1]$
there exists a $w_t\in Y$ such that $K^\ast
w_t=\xi_t\in\partial\left([t,1-t]^\mathrm{t}\cdot\bs\psi(u^\dagger)\right)$. Then for
every $\bs\eta^\ast$ selected by \eqref{eqn:pPhi} and with with $t^\ast=t(\bs\eta^*) =\tfrac{\eta_1^\ast(\delta)}{\eta_1^\ast (\delta) +\eta_2^\ast (\delta)}$, the following estimate holds
\begin{equation*}
  d_{\xi_{t^\ast}}(u_{\bs\eta^\ast}^\delta,u^\dagger)\leq C\left(\|w_{t^\ast}\|+\frac{F^{1+\frac{\gamma}{2}}
  (\delta\bs e)}{F^{1+\frac{\gamma}{2}}(\bs\eta^\ast)}\right)\max(\delta,\delta_\ast).
\end{equation*}
\end{thm}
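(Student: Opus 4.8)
The plan is to mimic the structure of the Hilbert-space proof just given, but replace all the spectral-calculus estimates with the Bregman-distance machinery of Lemma~\ref{lem:resest}. Fix $t=t^\ast$ and write $\psi_t(u)=[t,1-t]^\mathrm{t}\cdot\bs\psi(u)$, so that $\psi_{t^\ast}$ is a single convex regularization functional and the model \eqref{eqn:multi} at parameter $\bs\eta^\ast$ coincides, after dividing through by $s=\eta_1^\ast+\eta_2^\ast$, with single-parameter Tikhonov regularization $\phi(u,g^\delta)+s\,\psi_{t^\ast}(u)$; indeed $\bs\eta^\ast\cdot\bs\psi(u)=s\,\psi_{t^\ast}(u)$ by definition of $t^\ast$. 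The source condition at $t=t^\ast$ gives $K^\ast w_{t^\ast}=\xi_{t^\ast}\in\partial\psi_{t^\ast}(u^\dagger)$, so Lemma~\ref{lem:resest} applies verbatim with $\psi=\psi_{t^\ast}$, $\eta=s$, $w=w_{t^\ast}$, $\xi=\xi_{t^\ast}$.

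First I would invoke the triangle-type inequality in Lemma~\ref{lem:resest} to get
\begin{equation*}
d_{\xi_{t^\ast}}(u_{\bs\eta^\ast}^\delta,u^\dagger)\leq d_{\xi_{s}}(u_{\bs\eta^\ast}^\delta,u_{\bs\eta^\ast})+d_{\xi_{t^\ast}}(u_{\bs\eta^\ast},u^\dagger)+6\|w_{t^\ast}\|\delta,
\end{equation*}
where $\xi_s=K^\ast(g^\dagger-Ku_{\bs\eta^\ast})/s$. The propagation term is bounded by $\delta^2/(2s)$, again from Lemma~\ref{lem:resest}. So the two things left to control are $\delta^2/(2s)$ and the noise-free approximation error $d_{\xi_{t^\ast}}(u_{\bs\eta^\ast},u^\dagger)$, both in terms of $\delta$ and $\delta_\ast$.

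For $\delta^2/(2s)$ I would use the minimizing property of $\bs\eta^\ast$ for $\Phi_\gamma$ exactly as in the Hilbert-space proof: since $F(\bs\eta^\ast)^{\gamma+2}/(\eta_1^\ast\eta_2^\ast)\leq F(\widehat{\bs\eta})^{\gamma+2}/(\widehat\eta_1\widehat\eta_2)$ for every $\widehat{\bs\eta}$, and $\eta_1^\ast\eta_2^\ast\leq(s/2)^2\leq s^2$ actually I need a lower bound on $s$, so I take $\widehat{\bs\eta}=\delta\bs e$, giving $s^2\geq \eta_1^\ast\eta_2^\ast\geq \delta^2\,F(\bs\eta^\ast)^{\gamma+2}/F(\delta\bs e)^{\gamma+2}$, hence
\begin{equation*}
\frac{\delta^2}{2s}\leq \frac{\delta}{2}\cdot\frac{F^{1+\gamma/2}(\delta\bs e)}{F^{1+\gamma/2}(\bs\eta^\ast)},
\end{equation*}
which is the second term in the asserted bound. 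The approximation error $d_{\xi_{t^\ast}}(u_{\bs\eta^\ast},u^\dagger)$ is handled by the standard variational-source-condition computation: the minimizing property of $u_{\bs\eta^\ast}$ against the comparison element $u^\dagger$ for the functional $\phi+s\psi_{t^\ast}$ yields $\tfrac12\|Ku_{\bs\eta^\ast}-g^\dagger\|^2+s\psi_{t^\ast}(u_{\bs\eta^\ast})\leq s\psi_{t^\ast}(u^\dagger)$, and then writing $d_{\xi_{t^\ast}}(u_{\bs\eta^\ast},u^\dagger)=\psi_{t^\ast}(u_{\bs\eta^\ast})-\psi_{t^\ast}(u^\dagger)-\langle K^\ast w_{t^\ast},u_{\bs\eta^\ast}-u^\dagger\rangle$, moving the residual term over and using $\langle w_{t^\ast},K(u_{\bs\eta^\ast}-u^\dagger)\rangle\leq\|w_{t^\ast}\|\|Ku_{\bs\eta^\ast}-g^\dagger\|$ together with Young's inequality, gives $d_{\xi_{t^\ast}}(u_{\bs\eta^\ast},u^\dagger)\leq \tfrac12\|w_{t^\ast}\|^2 s$ or, combined with the residual estimate $\|Ku_{\bs\eta^\ast}-g^\dagger\|\leq 2\|w_{t^\ast}\|s^{1/2}$ of the same type, a bound $\leq C\|w_{t^\ast}\|^2 s$. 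Finally I would relate $\|Ku_{\bs\eta^\ast}-g^\dagger\|$ to $\delta_\ast$ and $\delta$ via the triangle inequality $\|Ku_{\bs\eta^\ast}-g^\dagger\|\leq\|Ku_{\bs\eta^\ast}^\delta-g^\delta\|+\|Ku_{\bs\eta^\ast}^\delta-Ku_{\bs\eta^\ast}\|+\|g^\delta-g^\dagger\|\leq\delta_\ast+2\delta+\delta$ using $\|K(u_{\bs\eta^\ast}^\delta-u_{\bs\eta^\ast})\|\leq2\delta$ from Lemma~\ref{lem:resest}, and collect all terms, absorbing constants, to reach $d_{\xi_{t^\ast}}(u_{\bs\eta^\ast}^\delta,u^\dagger)\leq C(\|w_{t^\ast}\|+F^{1+\gamma/2}(\delta\bs e)/F^{1+\gamma/2}(\bs\eta^\ast))\max(\delta,\delta_\ast)$.

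The main obstacle I anticipate is \emph{bookkeeping the dependence on $s$ consistently}: the approximation error naturally comes out proportional to $s$ (or $\|w\|^2 s$), the propagation error to $\delta^2/s$, and the balancing minimization of $\Phi_\gamma$ is precisely what lets one trade these off — but one has to be careful that the comparison parameter $\widehat{\bs\eta}=\delta\bs e$ feeds into $\Phi_\gamma$ correctly and that the exponent $1+\gamma/2$ emerges (note $F^{(\gamma+2)/2}=F^{1+\gamma/2}$ after taking square roots of the $\Phi_\gamma$-inequality). A secondary subtlety is that $\xi_{t^\ast}$, $w_{t^\ast}$, and the adjoint-based element $\xi_s$ all depend on $t^\ast$, which itself depends on $\delta$; but since the source condition is assumed uniformly in $t\in[0,1]$ and Lemma~\ref{lem:resest} is applied at the fixed realized value $t^\ast$, this causes no real difficulty — one just cannot claim $\|w_{t^\ast}\|$ is uniformly bounded without the extra hypothesis, which is why it appears explicitly in the estimate.
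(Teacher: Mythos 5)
Your proposal is correct and follows essentially the same route as the paper: the same reduction to single-penalty regularization with parameter $s=\eta_1^\ast+\eta_2^\ast$ and penalty $\psi_{t^\ast}$, the same three-term splitting via Lemma~\ref{lem:resest}, the same use of the $\Phi_\gamma$-minimality at the comparison point $\widehat{\bs\eta}=\delta\bs e$ to turn $\delta^2/(2s)$ into $\tfrac{\delta}{2}F^{1+\gamma/2}(\delta\bs e)/F^{1+\gamma/2}(\bs\eta^\ast)$, and the same bound $d_{\xi_{t^\ast}}(u_{\bs\eta^\ast},u^\dagger)\leq\|w_{t^\ast}\|\,\|Ku_{\bs\eta^\ast}-g^\dagger\|\leq\|w_{t^\ast}\|(\delta_\ast+3\delta)$. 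The only caveat is that the Young's-inequality variant you mention in passing (yielding $d\leq\tfrac12\|w_{t^\ast}\|^2 s$) would not close the argument, since no upper bound on $s$ is available; the residual-based route you actually use to collect terms is the right one and is exactly the paper's.
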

\begin{proof}
For any $t\in[0,1]$, let $\psi_{t}(u^\dagger) =[t,1-t]^\mathrm{t}\cdot\bs\psi(u^\dagger)$
and $\xi_t\in\partial \psi_t(u^\dagger)$, with $\xi_t$ and $w_t$ being the subgradient
and the representer in the source condition, respectively. By Lemma \ref{lem:resest}, we
have that for $\bs{\eta}$
\begin{equation*}
d_{\xi_t}(u_{\bs\eta}^\delta,u^\dagger)\leq d_{\xi_{\bs\eta}}(u_{\bs\eta}^\delta,u_{\bs\eta}) +
d_{\xi_t}(u_{\bs\eta},u^\dagger)+ 6\|w_t\|\delta,
\end{equation*}
where $\xi_{\bs\eta}=-K^\ast(K(u_{\bs\eta})-g^\dagger)/s\in\partial\psi_t(u_{\bs\eta})$
and $s=\eta_1+\eta_2$. It suffices to bound the terms involving Bregman distance. We
first estimate the approximation error $d_{\xi_t}(u_{\bs\eta},u^\dagger)$. To this end,
observe by the minimizing property of the element $u_{\bs\eta}$, i.e.,
\begin{equation*}
  \tfrac{1}{2}\|Ku_{\bs\eta}-g^\dagger\|^2+s\psi_t(u_{\bs\eta})\leq\tfrac{1}{2}
  \|Ku^\dagger-g^\dagger\|^2+s\psi_t(u^\dagger)=s\psi_t(u^\dagger).
\end{equation*}
This inequality, the definition of $d_{\xi_t}(u_{\bs\eta},u^\dagger)$, the source
condition and Lemma \ref{lem:resest} implies
\begin{equation*}
\begin{aligned}
d_{\xi_t}(u_{\bs\eta},u^\dagger)&\leq -\langle w_t, K(u_{\bs\eta}-u^\dagger)\rangle\\
&\leq \|w_t\|\|Ku_{\bs\eta}-g^\dagger\|\\
&\leq \|w_t\|\left(\|K(u_{\bs\eta}-u_{\bs\eta}^\delta)\|+
\|Ku_{\bs\eta}^\delta-g^\delta\|+\|g^\delta-g^\dagger\|\right)\\
&\leq\|w_t\|(2\delta+\delta_\ast+\delta)\leq
4\|w_t\|\max(\delta,\delta_\ast).
\end{aligned}
\end{equation*}

Next we estimate  the term $d_{\xi_{\bs\eta}}(u_{\bs\eta}^\delta, u_{\bs\eta})$. In view
of Lemma \ref{lem:resest}, we have
\begin{equation*}
d_{\xi_{\bs\eta}}(u_{\bs\eta}^\delta,u_{\bs\eta})\leq \frac{\delta^2}{2s}\leq \frac{\delta^2}{2\max(\eta_i)}.
\end{equation*}
Meanwhile, the minimizing property of $\bs\eta$ to the rule $\Phi_\gamma$ gives that for
any $\widehat{\bs\eta}$
\begin{equation*}
\frac{F^{2+\gamma}(\bs\eta)}{\max(\eta_i)^2}\leq\frac{F^{2+\gamma}(\bs\eta)}{\eta_1\eta_2}\leq
\frac{F^{2+\gamma}(\widehat{\bs\eta})}{\widehat{\eta}_1\widehat{\eta}_2}.
\end{equation*}
Upon letting $\widehat{\bs\eta}=\delta\bs e$ and combining the preceding two
inequalities, we get
\begin{equation*}
d_{\xi_{\bs\eta}}(u_{\bs\eta}^\delta,u_{\bs\eta})\leq
\frac{F(\delta\bs e)^{1+\frac{\gamma}{2}}}{F(\bs\eta)^{1+\frac{\gamma}{2}}}\frac{\delta}{2}.
\end{equation*}
Now combining these three estimates gives the desired assertion.
\end{proof}

The a posteriori error estimate in Theorem 2.4 coincides with that for the a priori
choice, e.g., $\bs\eta\sim\delta\bs e$, provided that the realized discrepancy
$\delta_\ast$ is of the same order with the exact noise level $\delta$.

\section{Numerical algorithms}\label{sec:alg}
Now we describe algorithms for numerically realizing the hybrid
principle and the balancing principle, i.e.,
Broyden's method and fixed-point algorithm, and discuss their properties.

\subsection{Broyden's method}
In practice, the application of the hybrid principle invokes solving the nonlinear system
\eqref{aT}, which is nontrivial due to its potential nonsmoothness and high degree of
nonlinearity. We propose using Broyden's method
\cite{Broyden:1965} for its efficient solution; see Algorithm
\ref{alg:broyd} for a complete description.

For the numerical treatment, we reformulate system \eqref{aT} equivalently as
\begin{equation*}%\label{eqn:bdpequiv}
  \mathbf{T}(\bs\eta)\equiv\left(\begin{aligned}
   \phi(u_{\bs\eta}^\delta,g^\delta) - \tfrac{1}{2}\delta^2 + \eta_2\psi_2(u_{\bs\eta}^\delta) - \eta_1\psi_1(u_{\bs\eta}^\delta)\\
   \phi(u_{\bs\eta}^\delta,g^\delta) - \tfrac{1}{2}\delta^2 + \eta_1\psi_1(u_{\bs\eta}^\delta) - \eta_2\psi_2(u_{\bs\eta}^\delta)
  \end{aligned}\right)=0.
\end{equation*}
The system is numerically more amenable than \eqref{eqn:rbsys}.
In Algorithm \ref{alg:broyd}, the Jacobian $\mathbf{J}_0$ can be approximated by finite
difference. Step 7 represents the celebrated Broyden update. The stopping criterion
is based on monitoring the residual norm $\|\mathbf{T}(\bs\eta)\|$. Note that each iteration
involves evaluating $\mathbf{T}(\bs\eta)$, which in turn incurs solving one optimization problem
of minimizing $J_{\bs\eta}$. Our experiences
indicate that it converges fast and steadily, however, a convergence analysis
is still missing.
\begin{algorithm}[ht!]
  \caption{Broyden's method for system \eqref{aT}.}\label{alg:broyd}
  \begin{algorithmic}[1]
    \STATE Set $k=0$ and choose $\bs\eta^0$.
    \STATE Compute the Jacobian $\mathbf{J}_0=\nabla \mathbf{T}(\bs\eta^0)$ and equation residual $\mathbf{T}(\bs\eta^0)$.
    \FOR {$k=1,\ldots,K$}
    \STATE Calculate the quasi-Newton update $\Delta\bs\eta = -\mathbf{J}_{k-1}^{-1}\mathbf{T}(\bs\eta^{k-1})$.
    \STATE Update the regularization parameter $\bs\eta$ by $\bs\eta^{k}=\bs\eta^{k-1}+\Delta\bs\eta$.
    \STATE Evaluate the equation residual $\mathbf{T}(\bs\eta^{k})$ and set $\Delta\mathbf{T}=\mathbf{T}(\bs\eta^k)-
           \mathbf{T}(\bs\eta^{k-1})$.
    \STATE Compute Jacobian update
           \begin{equation*}
               \mathbf{J}_k=\mathbf{J}_{k-1}+\frac{1}{\|\Delta\bs\eta\|^2}[\Delta\mathbf{T}-\mathbf{J}_k\Delta\bs\eta]
               \cdot\Delta\bs\eta^\mathrm{t}.
           \end{equation*}
    \STATE Check the stopping criterion.
    \ENDFOR
    \STATE Output the solution
  \end{algorithmic}
\end{algorithm}

\subsection{Fixed point algorithm}
In this part, we describe a fixed point algorithm for computing the minimizer of the rule
$\Phi_\gamma$. The algorithm was originally introduced in \cite{ItoJinTakeuchi:2011}, but without
any analysis. One basic version is listed in Algorithm \ref{alg:fpa1}, where the
subscript $-i$ refers to the index different from $i$. The stopping
criterion at Step 4 can be based on monitoring the relative change of the regularization
parameter $\bs\eta$ or the inverse solution $u_{\bs\eta}^\delta$.

\begin{algorithm}[ht!]
  \caption{Fixed point algorithm for minimizing \eqref{eqn:pPhi}.}\label{alg:fpa1}
  \begin{algorithmic}[1]
    \STATE Set $k=0$ and choose $\bs\eta^0$.
    \STATE Solve for $u^{k+1}$ by the Tikhonov regularization
      \begin{equation*}
          u^{k+1}=\arg\min_{u}\left\{\phi(u,g^\delta)+\bs\eta^k\cdot\bs\psi(u)\right\}.
      \end{equation*}
    \STATE Update the regularization parameter $\bs\eta^{k+1}$ by
      \begin{equation*}
         \begin{aligned}
            \eta_i^{k+1}&=\frac{1}{1+\gamma}\frac{\phi(u^{k+1},g^\delta)+\eta_{-i}^k\psi_{-i}(u^{k+1})}{\psi_i(u^{k+1})},\quad i=1,2.
            % \eta_2^{k+1}&=\frac{1}{1+\gamma}\frac{\phi(u^{k+1},g^\delta)+\eta_1^k\psi_1(u^{k+1})}{\psi_2(u^{k+1})}.
         \end{aligned}
      \end{equation*}
    \STATE Check the stopping criterion.
  \end{algorithmic}
\end{algorithm}

We shall analyze Algorithm \ref{alg:fpa1}. First, we introduce a fixed point operator
$\mathbf{T}$ by
\begin{equation*}
  \mathbf{T}(\bs\eta)=(1+\gamma)^{-1}\left(\begin{aligned}
      \frac{\phi(u_{\bs\eta}^\delta,g^\delta)+\eta_2\psi_2(u_{\bs\eta}^\delta)}{\psi_1(u_{\bs\eta}^\delta)}\\
      \frac{\phi(u_{\bs\eta}^\delta,g^\delta)+\eta_1\psi_1(u_{\bs\eta}^\delta)}{\psi_2(u_{\bs\eta}^\delta)}
      \end{aligned}\right).
\end{equation*}

We shall also need the next result \cite[Lem. 2.1 and Cor. 2.3]{Ito:2010a}.
\begin{lem}\label{lem:mon}
The function $\psi_i(u_{\bs\eta}^\delta)$ is monotonically decreasing in $\eta_i$, and
the following relations hold
\begin{equation*}
 \frac{\partial}{\partial\eta_i}(\phi(u_{\bs\eta}^\delta,g^\delta)+\eta_{-i}\psi_{-i}(u_{\bs\eta}^\delta))
 + \eta_i\frac{\partial}{\partial \eta_i}\psi_i(u_{\bs\eta}^\delta)=0,\quad i=1,2.
\end{equation*}
\end{lem}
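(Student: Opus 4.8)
The plan is to prove Lemma \ref{lem:mon} by following the one-parameter argument of \cite{Ito:2010a}, applied with the index $i$ singled out and the other index $-i$ frozen. The key observation is that, with $\eta_{-i}$ held fixed, the functional being minimized in \eqref{eqn:multi} can be regrouped as a one-parameter Tikhonov functional with fidelity-like term $\widetilde\phi(u) := \phi(u,g^\delta) + \eta_{-i}\psi_{-i}(u)$ and single penalty $\psi_i$, multiplied by $\eta_i$. The minimizer $u_{\bs\eta}^\delta$ is exactly the minimizer of $\widetilde\phi(u) + \eta_i\psi_i(u)$ over $\mathcal{C}$, so the results of \cite{Ito:2010a} for a single regularization parameter apply verbatim with $\phi$ replaced by $\widetilde\phi$.

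First I would record the value function in this reduced setting: define $f(\eta_i) := \inf_{u\in\mathcal{C}}\bigl\{\widetilde\phi(u) + \eta_i\psi_i(u)\bigr\} = F(\bs\eta)$ (viewed as a function of $\eta_i$ with $\eta_{-i}$ fixed). As in Lemma \ref{lem:diff}, $f$ is concave and nondecreasing in $\eta_i$, hence differentiable a.e., and where differentiable $f'(\eta_i) = \psi_i(u_{\bs\eta}^\delta)$. Concavity of $f$ immediately gives that $\psi_i(u_{\bs\eta}^\delta)$ is monotonically decreasing in $\eta_i$, which is the first assertion. For the identity, I would differentiate $f(\eta_i) = \widetilde\phi(u_{\bs\eta}^\delta) + \eta_i\psi_i(u_{\bs\eta}^\delta) = \phi(u_{\bs\eta}^\delta,g^\delta) + \eta_{-i}\psi_{-i}(u_{\bs\eta}^\delta) + \eta_i\psi_i(u_{\bs\eta}^\delta)$ with respect to $\eta_i$. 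Using the envelope theorem (the optimality of $u_{\bs\eta}^\delta$ annihilates the variation through $u$), we get $f'(\eta_i) = \psi_i(u_{\bs\eta}^\delta)$; on the other hand, differentiating the explicit right-hand side term by term gives $f'(\eta_i) = \tfrac{\partial}{\partial\eta_i}\bigl(\phi(u_{\bs\eta}^\delta,g^\delta)+\eta_{-i}\psi_{-i}(u_{\bs\eta}^\delta)\bigr) + \psi_i(u_{\bs\eta}^\delta) + \eta_i\tfrac{\partial}{\partial\eta_i}\psi_i(u_{\bs\eta}^\delta)$. Equating the two and cancelling $\psi_i(u_{\bs\eta}^\delta)$ yields exactly the stated relation.

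The main obstacle is the differentiability bookkeeping: $u_{\bs\eta}^\delta$ need not depend smoothly on $\bs\eta$, and $\psi_i$, $\phi$ may be nonsmooth, so the term-by-term differentiation above is only justified at points where the relevant quantities are differentiable — generically almost everywhere, as guaranteed by concavity of $f$. I would handle this exactly as in \cite{Ito:2010a}: establish the identity first at differentiability points via the envelope/subgradient calculus, noting that the one-sided derivatives of $\eta_i\mapsto\psi_i(u_{\bs\eta}^\delta)$ and $\eta_i\mapsto\phi(u_{\bs\eta}^\delta,g^\delta)+\eta_{-i}\psi_{-i}(u_{\bs\eta}^\delta)$ exist by monotonicity, and then invoke the continuity afforded by Lemma \ref{lem:diff}. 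Since the problem reduces cleanly to the one-parameter case already treated in the cited reference, I would keep the proof brief and largely defer to \cite[Lem.~2.1 and Cor.~2.3]{Ito:2010a}, spelling out only the reduction $\widetilde\phi(u) = \phi(u,g^\delta)+\eta_{-i}\psi_{-i}(u)$ that makes the citation applicable.
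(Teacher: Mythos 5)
Your proposal is correct and matches the paper's treatment: the paper gives no proof of Lemma \ref{lem:mon}, simply citing \cite[Lem.~2.1 and Cor.~2.3]{Ito:2010a}, and your reduction to the one-parameter case via $\widetilde\phi(u)=\phi(u,g^\delta)+\eta_{-i}\psi_{-i}(u)$ is exactly the observation that makes that citation applicable. The envelope-theorem computation and the concavity argument for monotonicity are both sound, with the differentiability caveats handled as in the cited reference.
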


We have the next monotone result for the fixed point operator $\mathbf{T}$.
\begin{prop}\label{prop:monoalgi}
Let the function $F(\bs\eta)$ be twice differentiable. Then the map $\mathbf{T}(\bs\eta)$
is monotone if $F^2(\bs\eta)F_{\eta_1\eta_1}(\bs\eta)F_{\eta_2\eta_2} (\bs\eta)>
(F_{\eta_1}(\bs\eta)F_{\eta_2}(\bs\eta)-F(\bs\eta)F_{\eta_1\eta_2} (\bs\eta))^2$.
\end{prop}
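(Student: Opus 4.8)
\textbf{Proof proposal for Proposition \ref{prop:monoalgi}.}

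The plan is to compute the Jacobian $\nabla\mathbf{T}(\bs\eta)$ explicitly and show that, under the stated determinant-type hypothesis, it has nonnegative entries (componentwise monotonicity of the map $\mathbf{T}$). First I would rewrite the two components of $\mathbf{T}$ in terms of the value function: by Lemma \ref{lem:diff}, $\psi_i(u_{\bs\eta}^\delta)=F_{\eta_i}(\bs\eta)$, and the numerator $\phi(u_{\bs\eta}^\delta,g^\delta)+\eta_{-i}\psi_{-i}(u_{\bs\eta}^\delta)$ equals $F(\bs\eta)-\eta_iF_{\eta_i}(\bs\eta)$, since $F(\bs\eta)=\phi+\eta_1\psi_1+\eta_2\psi_2$ at the minimizer. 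Hence
\begin{equation*}
  \mathbf{T}_i(\bs\eta)=\frac{1}{1+\gamma}\,\frac{F(\bs\eta)-\eta_iF_{\eta_i}(\bs\eta)}{F_{\eta_i}(\bs\eta)},\qquad i=1,2.
\end{equation*}
This reformulation is the key simplification: it reduces everything to derivatives of the single scalar function $F$, so the monotonicity question becomes a statement about the Hessian of $F$.

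Next I would differentiate. For the diagonal entry $\partial\mathbf{T}_i/\partial\eta_i$, differentiating $(F-\eta_iF_{\eta_i})/F_{\eta_i}=F/F_{\eta_i}-\eta_i$ gives $\partial\mathbf{T}_i/\partial\eta_i=(1+\gamma)^{-1}\big((F_{\eta_i}^2-FF_{\eta_i\eta_i})/F_{\eta_i}^2-1\big)$; using $F_{\eta_i\eta_i}\le 0$ from concavity (Lemma \ref{lem:diff}) and $F>0$, the term $-FF_{\eta_i\eta_i}/F_{\eta_i}^2\ge 0$, and one checks the remaining $-1$ combines with $F_{\eta_i}^2/F_{\eta_i}^2=1$ to cancel, so the diagonal entries are nonnegative automatically — in fact Lemma \ref{lem:mon} is exactly the identity that makes this cancellation transparent. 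For the off-diagonal entry $\partial\mathbf{T}_i/\partial\eta_{-i}$, differentiating in $\eta_{-i}$ yields $(1+\gamma)^{-1}\big(F_{\eta_{-i}}F_{\eta_i}-FF_{\eta_i\eta_{-i}}\big)/F_{\eta_i}^2$, whose sign is the sign of $F_{\eta_1}F_{\eta_2}-FF_{\eta_1\eta_2}$ (symmetric in the two indices). So the whole monotonicity claim comes down to a single scalar inequality together with the already-known sign conditions.

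The main obstacle — and the place where the hypothesis enters — is controlling the sign of $F_{\eta_1}F_{\eta_2}-FF_{\eta_1\eta_2}$, which is \emph{not} fixed by concavity alone (the mixed partial $F_{\eta_1\eta_2}$ can have either sign). I would argue as follows: the $2\times2$ Hessian of the concave function $F$ is negative semidefinite, so $F_{\eta_1\eta_1}F_{\eta_2\eta_2}\ge F_{\eta_1\eta_2}^2\ge 0$, hence $F_{\eta_1\eta_1},F_{\eta_2\eta_2}\le 0$ have a product bounding $F_{\eta_1\eta_2}^2$. The stated hypothesis $F^2F_{\eta_1\eta_1}F_{\eta_2\eta_2}>(F_{\eta_1}F_{\eta_2}-FF_{\eta_1\eta_2})^2$ then directly forces $|F_{\eta_1}F_{\eta_2}-FF_{\eta_1\eta_2}|<F\sqrt{F_{\eta_1\eta_1}F_{\eta_2\eta_2}}$; combined with $F_{\eta_i\eta_i}\le0$ and a comparison against the diagonal terms computed above (or equivalently against $F_{\eta_i}^2-FF_{\eta_i\eta_i}$, which dominates $F\sqrt{F_{\eta_1\eta_1}F_{\eta_2\eta_2}}$ by AM–GM when paired across $i=1,2$), this shows the off-diagonal entries are nonnegative. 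Thus all four entries of $\nabla\mathbf{T}$ are $\ge0$, which is the definition of $\mathbf{T}$ being monotone, completing the proof. The only delicate point is making the chain of inequalities tight enough that the hypothesis is actually used as stated rather than a stronger one; I would keep track of the $F_{\eta_i}^2$ denominators carefully and invoke Lemma \ref{lem:mon} to relate $\partial_{\eta_i}(\phi+\eta_{-i}\psi_{-i})$ to $-\eta_i\partial_{\eta_i}\psi_i$ so that no $u_{\bs\eta}^\delta$-level computations are needed.
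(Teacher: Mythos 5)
Your Jacobian computation is correct and, after translating $\psi_i=F_{\eta_i}$ and $\partial\psi_i/\partial\eta_j=F_{\eta_i\eta_j}$, reproduces exactly the entries the paper obtains via Lemma \ref{lem:mon}: diagonal entries $-(1+\gamma)^{-1}FF_{\eta_i\eta_i}/F_{\eta_i}^2\ge 0$ by concavity, and off-diagonal entries $(1+\gamma)^{-1}\bigl(F_{\eta_1}F_{\eta_2}-FF_{\eta_1\eta_2}\bigr)/F_{\eta_i}^2$. The gap is in your final step. You assert that the hypothesis forces the off-diagonal entries to be nonnegative, and that entrywise nonnegativity of $\nabla\mathbf{T}$ is what monotonicity means. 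But the hypothesis $F^2F_{\eta_1\eta_1}F_{\eta_2\eta_2}>(F_{\eta_1}F_{\eta_2}-FF_{\eta_1\eta_2})^2$ only bounds the \emph{magnitude} of $c:=F_{\eta_1}F_{\eta_2}-FF_{\eta_1\eta_2}$; it says nothing about its sign. Concavity gives $F_{\eta_1\eta_1}F_{\eta_2\eta_2}\ge F_{\eta_1\eta_2}^2$ but leaves the sign of the mixed partial $F_{\eta_1\eta_2}$, hence of $c$, completely free, and $c<0$ is perfectly consistent with the hypothesis. No comparison with the diagonal entries or AM--GM can convert a bound on $|c|$ into $c\ge 0$; that step is a non sequitur, and with it the claimed entrywise nonnegativity collapses.

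The hypothesis is tailored to a different, simpler conclusion. Because both off-diagonal entries carry the \emph{same} factor $c$ (by equality of mixed partials, $\partial\psi_1/\partial\eta_2=F_{\eta_1\eta_2}=\partial\psi_2/\partial\eta_1$), their product is $c^2/(F_{\eta_1}^2F_{\eta_2}^2)$ irrespective of the sign of $c$, so
\begin{equation*}
\det\nabla\mathbf{T}=\frac{(1+\gamma)^{-2}}{F_{\eta_1}^2F_{\eta_2}^2}\left[F^2F_{\eta_1\eta_1}F_{\eta_2\eta_2}-\bigl(F_{\eta_1}F_{\eta_2}-FF_{\eta_1\eta_2}\bigr)^2\right],
\end{equation*}
and the stated assumption is precisely the positivity of this determinant. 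The paper's argument is: nonnegative diagonal entries \emph{plus} positive determinant, not entrywise nonnegativity. To repair your proof, keep your computation of $\nabla\mathbf{T}$ (which is fine and arguably cleaner than the paper's), drop the attempt to sign the off-diagonals, and instead evaluate the determinant and invoke the hypothesis directly.
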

\begin{proof}
Let $A(\bs\eta)=\phi+\eta_2\psi_2$ and $B(\bs\eta)=\phi+\eta_1\psi_1$. By Lemma
\ref{lem:mon}, there hold
\begin{equation}\label{eqn:varderiv}
\frac{\partial A}{\partial \eta_1}+\eta_1\frac{\partial\psi_1
}{\partial \eta_1}=0\quad\mbox{and}\quad \frac{\partial B}{\partial
\eta_2}+\eta_2\frac{\partial\psi_2 }{\partial \eta_2}=0.
\end{equation}
With the help of these two relations, we deduce
\begin{equation*}
  \begin{aligned}
    \frac{\partial}{\partial \eta_1}\frac{A}{\psi_1}&=
      \frac{\psi_1\frac{\partial A}{\partial
      \eta_1}-A\frac{\partial\psi_1}{\partial\eta_1}}{\psi_1^2}=\frac{\psi_1(
      -\eta_1\frac{\partial\psi_1}{\partial\eta_1})-A\frac{\partial\psi_1}{\partial\eta_1}}{\psi_1^2}\\
      &=-\frac{1}{\psi_1^2}\frac{\partial\psi_1}{\partial\eta_1}(\eta_1\psi_1+A)=-\frac{F}{\psi_1^2}\frac{\partial\psi_1}{\partial\eta_1},
  \end{aligned}
\end{equation*}
and
\begin{equation*}
  \begin{aligned}
    \frac{\partial}{\partial\eta_2}\frac{A}{\psi_1}&=\frac{\psi_1\frac{\partial
    A}{\partial\eta_2}-A\frac{\partial\psi_1}{\partial\eta_2}}{\psi_1^2}
    =\frac{\psi_1\frac{\partial}{\partial\eta_2}(F-\eta_1\psi_1)-(F-\eta_1\psi_1)\frac{\partial\psi_1}{\partial\eta_2}}{\psi_1^2}\\
    &=\frac{1}{\psi_1^2}\left[\psi_1\psi_2-F\frac{\partial\psi_1}{\partial\eta_2}\right],
  \end{aligned}
\end{equation*}
where we have used the relation $\frac{\partial F}{\partial\eta_2}=\psi_2$ from Lemma
\ref{lem:diff}. Similarly, we have
\begin{equation*}
   \frac{\partial}{\partial\eta_2}\frac{B}{\psi_2}=-\frac{F}{\psi_2^2}\frac{\partial\psi_2}{\partial\eta_2}\quad\mbox{and}\quad
   \frac{\partial}{\partial\eta_1}\frac{B}{\psi_2}=\frac{1}{\psi_2^2}\left[\psi_1\psi_2-F\frac{\partial\psi_2}{\partial\eta_1}\right].
\end{equation*}
Therefore, the Jacobian $\nabla\mathbf{T}$ of the operator $\mathbf{T}$ is given by
\begin{equation*}\displaystyle
   \nabla\mathbf{T}=(1+\gamma)^{-1}\left(\begin{array}{cc}
       -\frac{F}{\psi_1^2}\frac{\partial\psi_1}{\partial\eta_1}
       &\frac{1}{\psi_1^2}\left[\psi_1\psi_2-F\frac{\partial\psi_1}{\partial\eta_2}\right]\\
       \frac{1}{\psi_2^2}\left[\psi_1\psi_2-F\frac{\partial\psi_2}{\partial\eta_1}\right]
       &-\frac{F}{\psi_2^2}\frac{\partial\psi_2}{\partial\eta_2}
   \end{array}\right).
\end{equation*}

Now Lemma \ref{lem:mon} implies that $-\frac{\partial\psi_i}{\partial \eta_i} \geq 0$.
Hence, it suffices to show that the determinant $|\nabla \mathbf{T}|>0$. By Lemma \ref{lem:diff}, the identity
$\frac{\partial \psi_1}{\partial\eta_2}=F_{\eta_1\eta_2}=\frac{\partial
\psi_2}{\partial\eta_1}$ holds, and thus $|\nabla \mathbf{T}|$ is given by
\begin{equation*}
|\nabla\mathbf{T}|=(1+\gamma)^{-1}\frac{1}{\psi_1^2\psi_2^2}\left[F^2\frac{\partial \psi_1}{\partial \eta_1}
\frac{\partial \psi_2}{\partial \eta_2}-\left(\psi_1\psi_2-F\frac{\partial\psi_2}{\partial\eta_1}\right)^2\right].
\end{equation*}
Hence, the nonnegativity of $|\nabla\mathbf{T}|$ follows from the assumption
$F^2(\bs\eta)F_{\eta_1\eta_1}(\bs\eta) F_{\eta_2\eta_2}(\bs\eta)-(F_{\eta_1}(\bs\eta)
F_{\eta_2}(\bs\eta)-F(\bs\eta)F_{\eta_1\eta_2}(\bs\eta))^2>0$. This concludes the proof.
\end{proof}

\section{Numerical experiments}\label{sec:exp}

We now provide some numerical results for
the hybrid principle \eqref{aT}; and the balancing principle
\eqref{eqn:pPhi} has been numerically exemplified in \cite{ItoJinTakeuchi:2011} and will
not be addressed here. The examples are integral equations of the first kind with kernel $k(s,t)$ and
solution $u(t)$. All the examples are taken from \cite{ItoJinTakeuchi:2011}.
The discretized linear system takes the form $\mathbf{Ku}^\dagger=\mathbf{g}^\dagger$.
The data $\mathbf{g}^\dagger$ is then corrupted by noises, i.e., $g_i^\delta=g_i^\dagger+
\max_i\{|g_i^\dagger|\}\varepsilon\zeta_i$, where $\zeta_i$ are standard Gaussian variables, 
and $\varepsilon$ is the relative noise level.

\subsection{$\mathrm{H}^1$-$\mathrm{TV}$ model}

\begin{exam}\label{exam:h1tv}
Let $\xi(t)=\chi_{|t|\leq3}(1+\cos\frac{\pi t}{3}$, and the kernel $k(s,t)$ is given by 
$\xi(s-t)$. The true solution $u^\dagger$ exhibits both flat and smoothly varying regions 
and it is shown in Fig. \ref{fig:h1tv}, and the integration interval is $[-6,6]$. We adopt 
two penalties $\psi_1(u)=|u|^2_{\mathrm{H}^1}$ and $\psi_2(u)=|u|_\mathrm{TV}$.
\end{exam}

\begin{table}[htb!]
\centering \caption{Numerical results for Example \ref{exam:h1tv}.}
\begin{tabular}{ccccccccc}
\hline
$\epsilon$&$\bs\eta_\mathrm{bdp}$&$\bs\eta_\mathrm{opt}$&$\eta_\mathrm{h1}$&$\eta_\mathrm{tv}$&$e_\mathrm{bdp}$&$e_\mathrm{opt}$&$e_\mathrm{h1}$&$e_\mathrm{tv}$\\
\hline
5e-2&(5.89e-3,9.67e-3)&(2.30e-4,2.05e-3)&6.17e-4&9.67e-3&3.50e-2&2.65e-2&3.96e-2&1.07e-1\\
5e-3&(3.41e-4,5.98e-4)&(2.34e-5,3.92e-4)&8.34e-5&4.51e-4&2.45e-2&1.09e-2&2.70e-2&9.49e-2\\
5e-4&(2.93e-6,5.41e-6)&(2.55e-6,4.48e-5)&1.26e-6&5.16e-5&1.22e-2&8.86e-3&1.38e-2&4.49e-2\\
5e-5&(1.19e-7,2.26e-7)&(5.88e-8,4.36e-6)&8.98e-8&3.79e-6&6.91e-3&5.53e-3&9.40e-3&1.68e-2\\
5e-6&(4.94e-9,9.50e-9)&(1.93e-10,6.22e-9)&5.18e-10&2.80e-7&4.64e-3&2.90e-3&5.29e-3&5.13e-3\\
\hline
\end{tabular}\label{tab:h1tv}
\end{table}

\begin{figure}
\centering
\begin{tabular}{ccc}
\includegraphics[trim = 1cm 0cm 2cm .5cm,width=5.2cm]{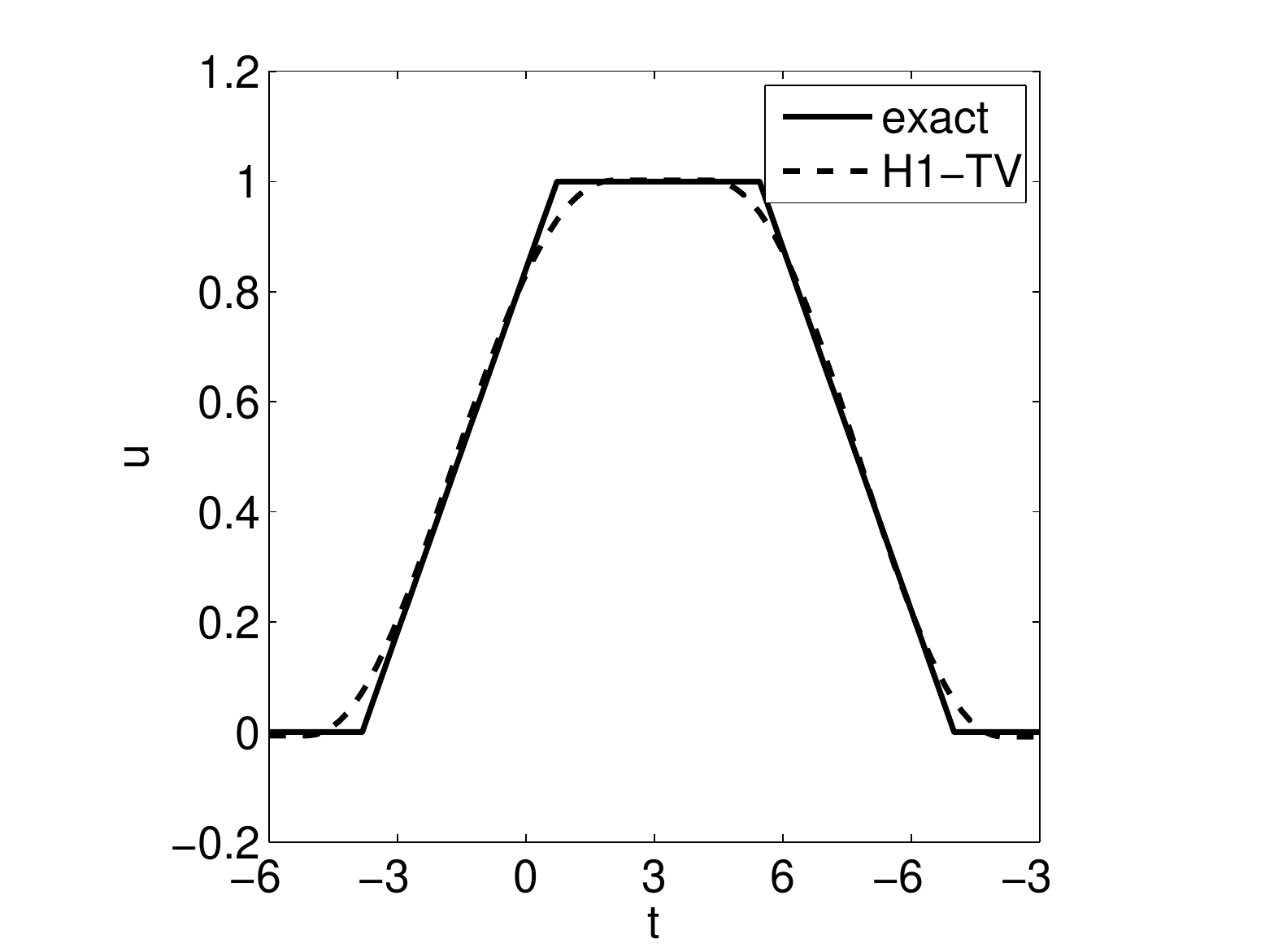}&
\includegraphics[trim = 1cm 0cm 2cm .5cm,width=5.2cm]{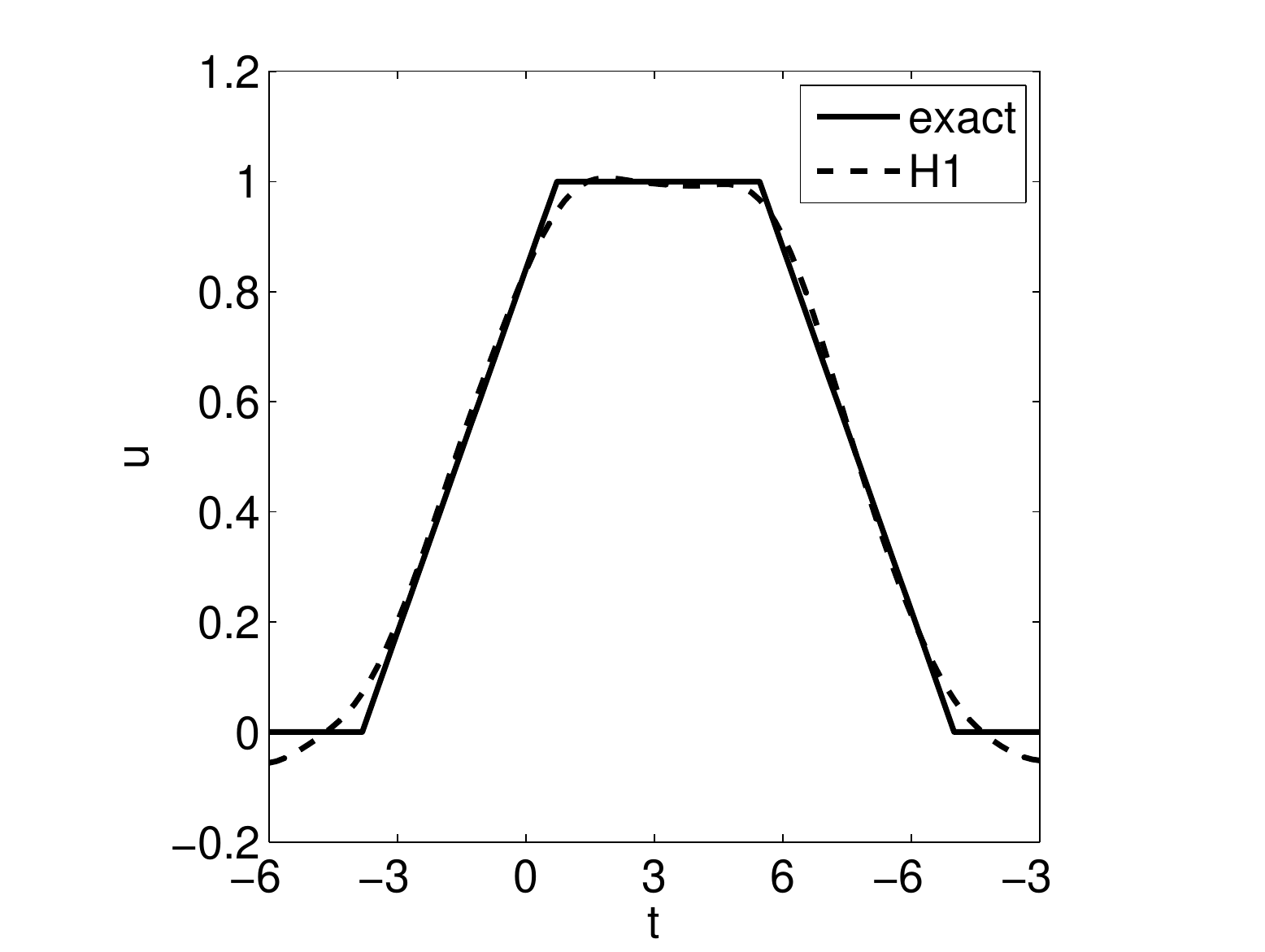}&
\includegraphics[trim = 1cm 0cm 2cm .5cm,width=5.2cm]{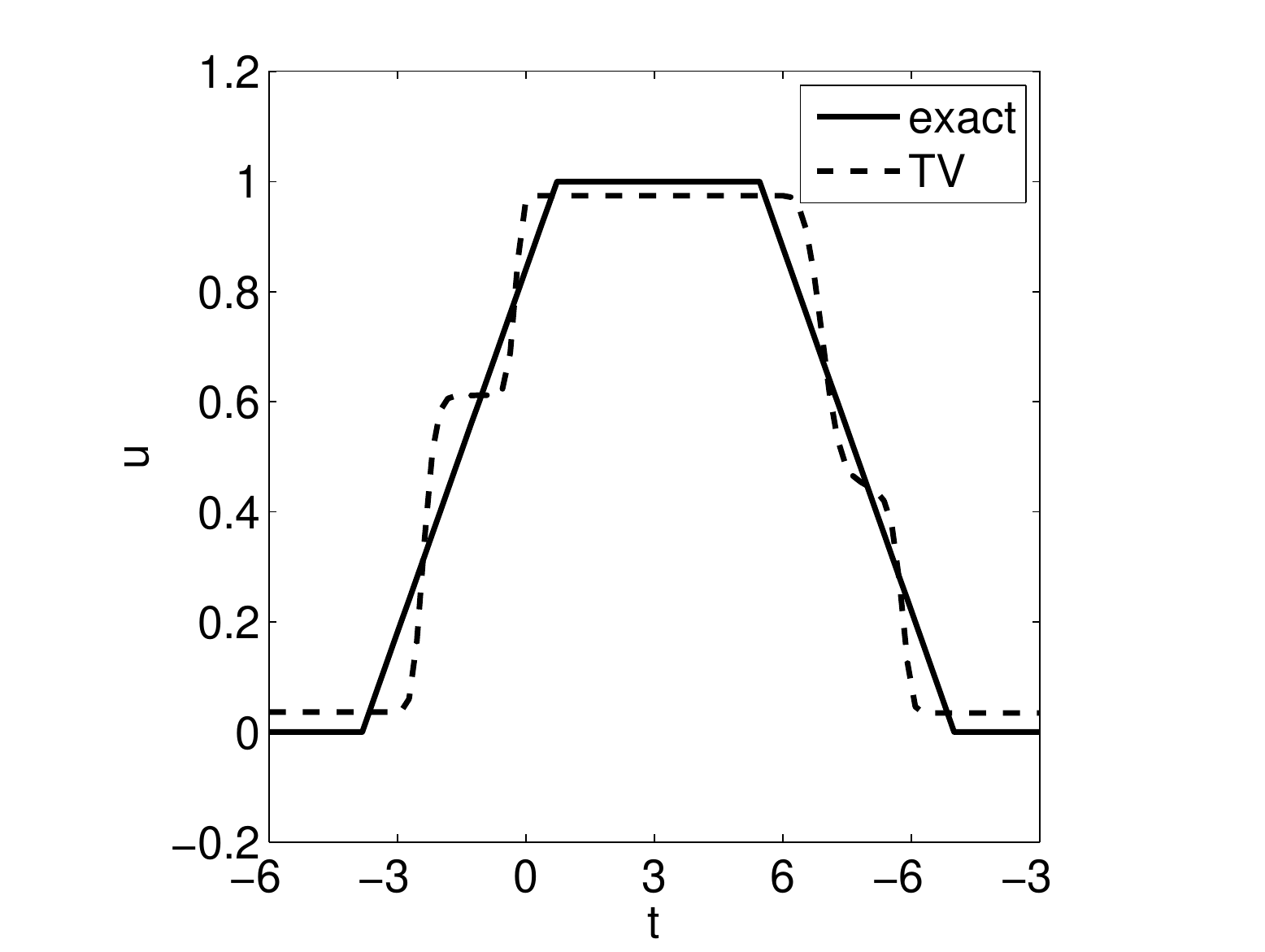}\\
$u_\mathrm{bdp}$ & $u_\mathrm{h1}$ & $u_\mathrm{tv}$
\end{tabular}
\caption{Numerical results for Example \ref{exam:h1tv} with $\varepsilon=5\%$ noise.}\label{fig:h1tv}
\end{figure}

The numerical results are summarized in Table \ref{tab:h1tv}. In the table, the
subscripts $\mathrm{bdp}$ and $\mathrm{opt}$ respectively refer to the hybrid principle and the optimal
choice, i.e., the value giving the smallest error. The
single-parameter models are indicated by subscripts $\mathrm{h1}$ and
$\mathrm{tv}$, and the regularization parameter shown in Table \ref{tab:h1tv} is the
optimal one. The accuracy of the results is measured by the relative $L^2$ error
$e=\|u-u^\dagger\|_{\mathrm{L}^2}/\|u^\dagger\|_{\mathrm{L}^2}$. We observe that the 
$\mathrm{H}^1$-$\mathrm{TV}$ model in conjunction with the hybrid principle achieves a smaller error than
either $\mathrm{H}^1$ or $\mathrm{TV}$ with the optimal choice, thereby showing the advantages of
the $\mathrm{H}^1$-$\mathrm{TV}$ model. Further, the hybrid principle gives an error fairly close
to the optimal one, within a factor of two, and the error decreases as the noise level decreases.

Let us briefly comment on the performance of the multi-parameter model. The classical $\mathrm{H}^1$ model recovers
the flat region unsatisfactorily, whereas the $\mathrm{TV}$ approach clearly suffers from staircasing effect
in the gray region and reduced magnitude in the flat region, cf. Fig. \ref{fig:h1tv}. In
contrast, the $\mathrm{H}^1$-$\mathrm{TV}$ model preserves the magnitude of flat region while
recovering the gray region excellently. Therefore, the $\mathrm{H}^1$-$\mathrm{TV}$ model does combine the strengths of
both $\mathrm{H}^1$ and $\mathrm{TV}$ models. Finally, we would like to remark that Broyden's method
converges rapidly with the convergence achieved in five iterations, and the convergence behavior
is not sensitive to the initial guess.

\subsection{Elastic-net model}
\begin{exam}\label{exam:l1l2}
The kernel $k(s,t)$ is given by $\tfrac{1}{4}\left(\tfrac{1}{16}+ (s-t)^2\right)^{
-\frac{3}{2}}$, the exact solution $u^\dagger$ consists of two bumps and it is shown in
Fig. \ref{fig:l1l2}. The penalties are $\psi_1(u)=\|u\|_{\ell^1}$ and
$\psi_2(u)=\frac{1}{2} \|u\|_{\ell^2}^2$ to retrieve the groupwise sparsity structure, 
which is known as elastic-net in statistics \cite{ZouHastie:2005}.
The integration interval is $[0,1]$. The size of the problem is 100.
\end{exam}

\begin{figure}
\centering
\begin{tabular}{ccc}
\includegraphics[trim = 1cm 0cm 2cm .5cm, clip=true,width=5.2cm]{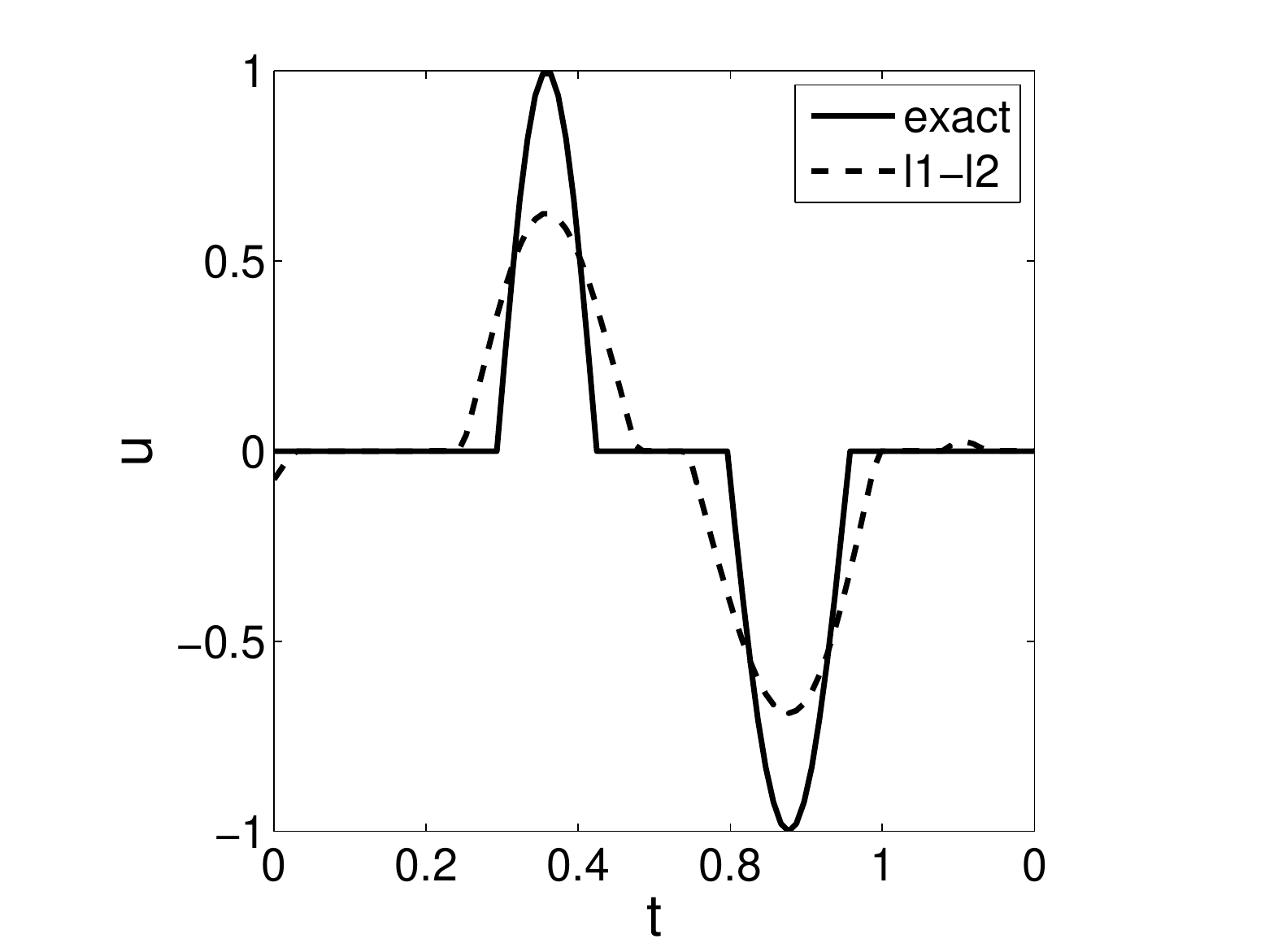}&
\includegraphics[trim = 1cm 0cm 2cm .5cm, clip=true,width=5.2cm]{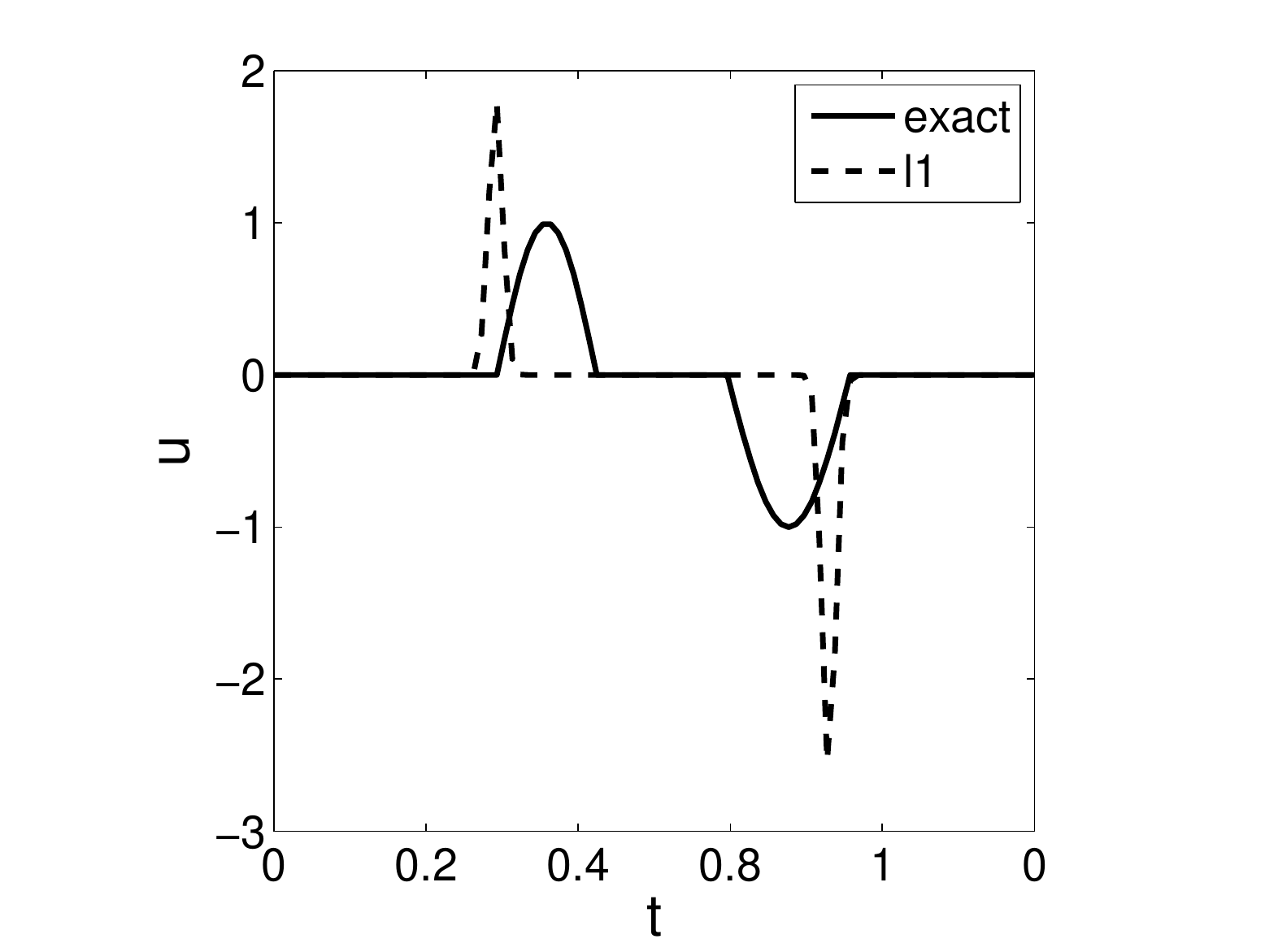}&
\includegraphics[trim = 1cm 0cm 2cm .5cm, clip=true,width=5.2cm]{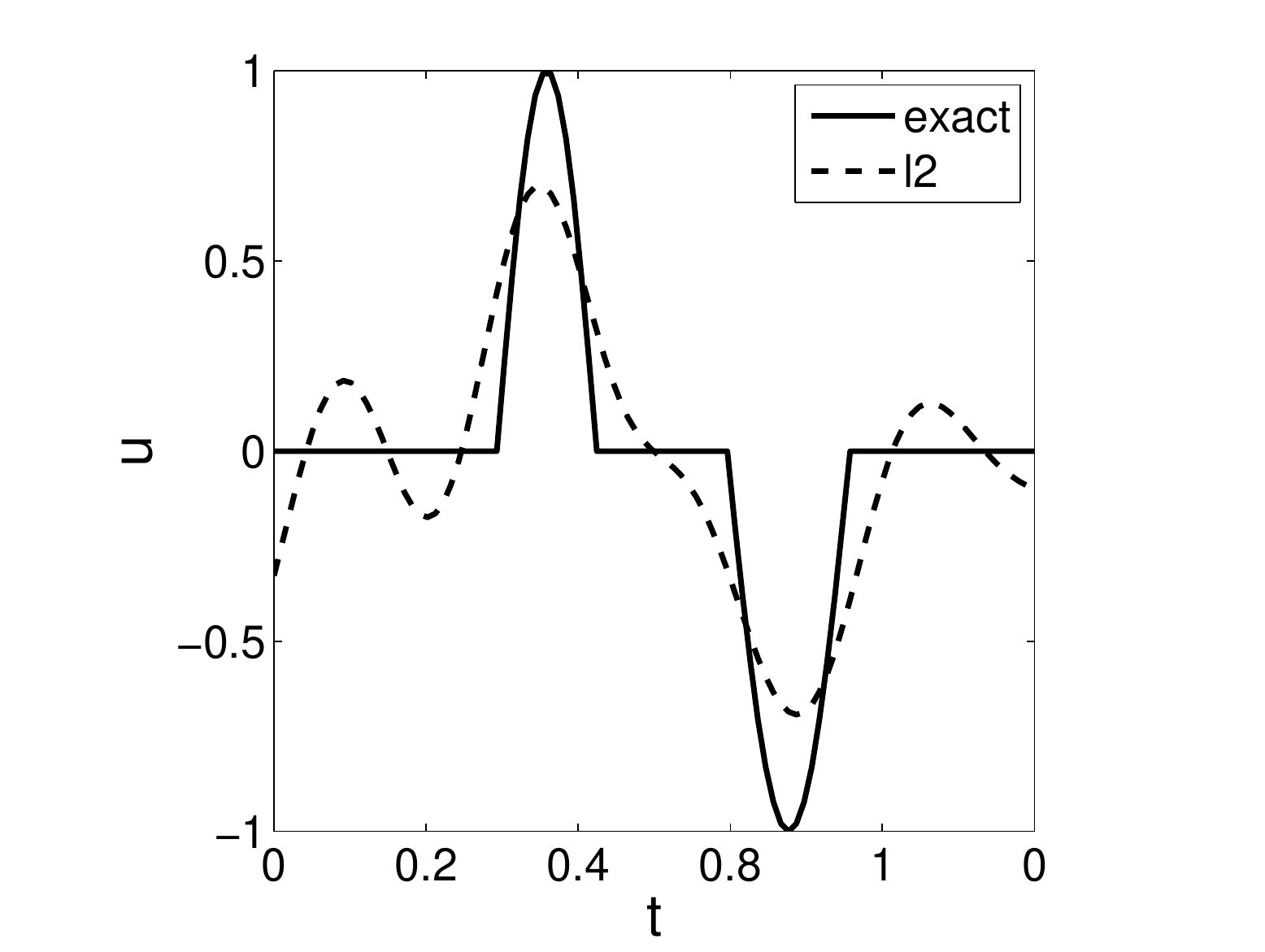}\\
$u_\mathrm{bdp}$ &$u_\mathrm{l1}$ & $u_\mathrm{l2}$
\end{tabular}
\caption{Numerical results for Example \ref{exam:l1l2} with $\varepsilon=5\%$ noise.}\label{fig:l1l2}
\end{figure}

\begin{table}[ht!]
\centering\caption{Numerical results for Example \ref{exam:l1l2}.}
\begin{tabular}{ccccccccc}
\hline
$\epsilon$&$\bs\eta_\mathrm{bdp}$&$\bs\eta_\mathrm{opt}$&$\eta_\mathrm{l1}$&$\eta_\mathrm{l2}$&$e_\mathrm{bdp}$&$e_\mathrm{opt}$&$e_\mathrm{l1}$&$e_\mathrm{l2}$\\
\hline
5e-2&(2.44e-3,9.60e-3)&(2.81e-3,1.16e-3)&1.16e0 &3.11e-3&4.09e-1&8.57e-2&1.29e0 &4.58e-1\\
5e-3&(7.30e-5,2.25e-4)&(2.59e-4,1.11e-4)&9.67e-5&3.13e-5&1.96e-1&1.20e-2&9.00e-1&2.90e-1\\
5e-4&(4.73e-6,1.27e-5)&(2.23e-5,1.11e-5)&1.27e-5&4.13e-6&7.50e-2&8.18e-3&6.18e-1&2.17e-1\\
5e-5&(3.29e-7,8.42e-7)&(2.73e-6,1.28e-6)&1.12e-6&3.79e-8&2.01e-2&4.69e-3&4.85e-1&1.66e-1\\
5e-6&(2.56e-8,6.50e-8)&(1.60e-7,9.92e-8)&5.14e-9&1.25e-9&1.16e-2&2.27e-3&2.62e-1&9.55e-2\\
\hline
\end{tabular}\label{tab:l1l2}
\end{table}

It is observed from Table \ref{tab:l1l2} that the hybrid principle gives slightly too 
small but otherwise reasonable estimate for the optimal choice. A close look at Fig. 
\ref{fig:l1l2} indicates that the solution $u_\mathrm{l2}$ has almost no zero entries, and
thus it fails to distinguish between relevant and irrelevant factors. Meanwhile,
many entries of the $\ell^1$ solution are zero, and thus some relevant factors
are correctly identified. However, it tends to select only a part instead of all relevant
factors. The elastic-net combines the best of both $\ell^1$
and $\ell^2$ models, and it achieves the desired goal of identifying the group structure.

\subsection{Image deblurring}
\begin{exam}\label{exam:l1l22d}
The kernel $k(s,t)$
performs standard Gaussian blur with standard deviation $1$ and blurring width $5$. The
exact solution $u^\dagger$ is shown in Fig. \ref{fig:l1l22d}. The size of the image is
$50\times50$.  The penalties are
$\psi_1(u)=\|u\|_{\ell^1}$ and $\psi_2(u)=\frac{1}{2} \|u\|_{\ell^2}^2$.
\end{exam}

\begin{figure}\centering
\begin{tabular}{ccc}
\includegraphics[width=3.5cm]{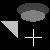}&\includegraphics[width=3.5cm]{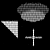}&\\
 $u^\dagger$& $u_\mathrm{opt}$ & \\
\includegraphics[width=3.5cm]{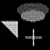}&\includegraphics[width=3.5cm]{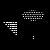}
&\includegraphics[width=3.5cm]{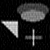}\\
$u_\mathrm{bdp}$& $u_\mathrm{l1}$ & $u_\mathrm{l2}$
\end{tabular}
\caption{Numerical results for Example \ref{exam:l1l22d} with $\varepsilon=1\%$
noise. The selected regularization parameters are
$\bs\eta_\mathrm{bdp}$=(4.70e-3,4.65e-3), $\bs\eta_\mathrm{opt}$
=(1.26e-2,1.31e-3), $\eta_\mathrm{l1}$=5.67e-1, and $\eta_\mathrm{l2}$=3.51e-3.}\label{fig:l1l22d}
\end{figure}

This example represents a more realistic problem of image deblurring. Here one half of the
data points are retained, which renders the problem far more ill-posed. The $\ell^1$ solution
is very spiky, cf. Fig. \ref{fig:l1l22d}, and neighboring pixels
act independently of each other. In particular, many pixels
in the blocks and the cross are missing. In contrast, the solution $u_\mathrm{l2}$ is smooth,
but there are many small spurious
oscillations in the background. The elastic-net model achieves the best of the two:
retaining the block structure with only few spurious nonzero coefficients. The numbers are also very telling:
$e_\mathrm{bdp}=$2.96e-1, $e_\mathrm{o}$=2.44e-1,
$e_\mathrm{l1}=$9.21e-1, and $e_\mathrm{l2}$=3.42e-1. Hence, the
error $e_\mathrm{bdp}$ agrees well with the optimal choice, and it is
smaller than that with the optimal choice for either $\ell^1$ or $\ell^2$ models.

\section{Conclusions}
We have studied multi-parameter regularization from the viewpoint of
augmented Tikhonov regularization, and shown a unified way to derive the balancing principle and
balanced discrepancy principle. A priori and a posteriori error estimates for the principles were provided,
and efficient numerical algorithms (Broyden's method and fixed point algorithm) were
presented and discussed. Numerical results were presented to illustrate the feasibility of the balanced discrepancy principle.

\section*{Acknowledgements}
This work was partially carried out during
the visit of K.I. at Institute for Applied Mathematics and
Computational Science of Texas A\&M University. He would like to thank the institute for
the hospitality.
\bibliographystyle{abbrv}
\bibliography{multitikh}
\end{document}